%% LyX 2.2.2 created this file.  For more info, see http://www.lyx.org/.
%% Do not edit unless you really know what you are doing.
\documentclass[english]{article}
\usepackage[T1]{fontenc}
\usepackage[latin9]{inputenc}
\usepackage{float}
\usepackage{amsmath}
\usepackage{amsthm}
\usepackage{amssymb}

\makeatletter
%%%%%%%%%%%%%%%%%%%%%%%%%%%%%% Textclass specific LaTeX commands.
\theoremstyle{plain}
\newtheorem{thm}{\protect\theoremname}
  \theoremstyle{definition}
  \newtheorem{defn}[thm]{\protect\definitionname}
  \theoremstyle{remark}
  \newtheorem{rem}[thm]{\protect\remarkname}
  \theoremstyle{definition}
  \newtheorem{example}[thm]{\protect\examplename}
  \theoremstyle{plain}
  \newtheorem{lem}[thm]{\protect\lemmaname}
  \theoremstyle{plain}
  \newtheorem{cor}[thm]{\protect\corollaryname}

%%%%%%%%%%%%%%%%%%%%%%%%%%%%%% User specified LaTeX commands.
\usepackage{tikz}
\usetikzlibrary{decorations.pathreplacing}

\makeatother

\usepackage{babel}
  \providecommand{\corollaryname}{Corollary}
  \providecommand{\definitionname}{Definition}
  \providecommand{\examplename}{Example}
  \providecommand{\lemmaname}{Lemma}
  \providecommand{\remarkname}{Remark}
\providecommand{\theoremname}{Theorem}

\begin{document}

\title{Tangent Cones to TT Varieties}

\author{Benjamin Kutschan}
\maketitle
\begin{abstract}
As already done for the matrix case in \cite[p.256]{harris}, \cite[Thm. 6.1, p.1872]{absil_graph_similarity}
and \cite[Thm. 3.2]{uschmajew_schneider_matrix_case} we give a parametrization
of the Bouligand tangent cone of the variety of tensors of bounded
TT rank. We discuss how the proof generalizes to any binary hierarchical
format. The parametrization can be rewritten as an orthogonal sum
of TT tensors. Its retraction onto the variety is particularly easy
to compose. We also give an implicit description of the tangent cone
as the solution of a system of polynomial equations.
\end{abstract}

\section{Introduction}

An algebraic variety is defined to be the set of solutions to a system
of polynomial equations. See \cite{coxLittleShea} for a detailed
textbook on algebraic varieties. It is well known that the set of
tensors of bounded TT rank is an algebraic variety. It is generated
by the determinants of minors whose size is the rank of the corresponding
matricizations plus $1$. In smooth points of the variety the tangent
cone is a linear subspace and is also called tangent plane or tangent
space. Even in singular points the tangent cone is an algebraic variety
itself. It can be computed using Gröbner bases as described in \cite[§ 9.7 p. 498 bottom]{coxLittleShea}.
This algorithm yields an implicitely defined tangent cone. Finding
a parametrization (in the context of algebraic geometry, parametrization
means by polynomials) of an algebraic variety in general and of the
tangent cone in particular is a more delicate matter. Even though
there is no general algorithm to determine the parametrization, there
is an algorithm to determine, whether a given parametrization produces
an implicitely defined variety. This process is called Implicitization.
It can also be done using Gröbner bases and is discussed in the textbook
\cite[§ 3.3, p. 128]{coxLittleShea}. Even for varieties with few
defining polynomials and few variables Gröbner bases tend to be very
large. Calculating the tangent cone (in $\mathbb{C}$) and determining
whether our guess is the correct parametrization worked for the variety
of $3\times3\times3$ TT tensors. We used Macaulay2 \cite{M2}. However
other non-trivial examples beyond dimensions $4\times4\times4$ are
intractible with this method as the size of the Gröbner bases produced
appears to grow beyond any reasonable amount of memory. Instead of
using Gröbner bases, it turns out that we can parametrize the tangent
cone of TT varieties by exploiting orthogonality.
\begin{defn}
A \textit{tensor} $A$ is an element of the tensor space $\mathbb{R}^{n_{1}\times...\times n_{d}}$
where $d$ is called the \textit{order} and $n_{i}$ is called the
\textit{dimension} (in the direction) of order $i$.
\end{defn}
\begin{rem}
Note that we can canonically identify the spaces $\mathbb{R}^{n\times m}$
and $\mathbb{R}^{n\cdot m}$ and we will do so throughout this paper
without explicitely stating. We write $A^{(n_{1}...n_{i})\times(n_{i+1}...n_{d})}$
for the matricization (i.e. combining several indices into one using
e.g. lexicographic order) of $A\in\mathbb{R}^{n_{1}\times...\times n_{d}}$
and $A^{n_{1}\times...\times n_{d}}$ for the tensorization. Define
the shorthand $A^{L}:=A^{n_{1}\times(n_{2}...n_{d})}$ and $A^{R}:=A^{(n_{1}...n_{d-1})\times n_{d}}$.
If it is clear from the context we will often omit the matricization
notation.
\end{rem}
Throughout this paper we will use the \textit{TT product} defined
below. In the matrix case it is equivalent to the matrix product and
it allows us with little effort to rigorously describe tensor diagrams.
Even though we do not use tensor diagrams in this work, figure \ref{fig:diagrams}
shall serve as a dictionary to aid those familiar with tensor diagrams.
\begin{figure}[h]
\caption{\label{fig:diagrams}tensor diagrams}
\begin{tikzpicture}[scale=0.2]
\node[anchor=south] at (0,0) {$A_1$}; \draw(4,0)--(0,0)--(0,-2); \draw[black,fill=black] (0,0) circle (1.8ex);
\begin{scope}[shift={(4,0)}] \node[anchor=south] at (0,0) {$A_2$}; \draw(4,0)--(0,0)--(0,-2); \draw[black,fill=black] (0,0) circle (1.8ex); \end{scope}
\begin{scope}[shift={(8,0)}] \node[anchor=south] at (0,0) {$A_3$}; \draw(0,0)--(0,-2); \draw[black,fill=black] (0,0) circle (1.8ex); \end{scope}
\begin{scope}[shift={(14,2)}]
\node[] at (0,-3) {$=A_1A_2A_3$};
\end{scope}
\begin{scope}[shift={(25,1)}] %ein element
\node[anchor=north] at (0,-4) {$A_1$};
\node[anchor=south] at (0,0) {$B_1$};
\draw(4,0)--(0,0)--(0,-4)--(4,-4);
\draw[black,fill=black] (0,0) circle (1.8ex); \draw[black,fill=black] (0,-4) circle (1.8ex);
\begin{scope}[shift={(4,0)}] \node[anchor=north] at (0,-4) {$A_2$}; \node[anchor=south] at (0,0) {$B_2$}; \draw(4,0)--(0,0)--(0,-4)--(4,-4); \draw[black,fill=black] (0,0) circle (1.8ex); \draw[black,fill=black] (0,-4) circle (1.8ex); \end{scope}
\begin{scope}[shift={(8,0)}] \node[anchor=north] at (0,-4) {$A_3$}; \node[anchor=south] at (0,0) {$B_3$}; \draw(2,0)--(0,0)--(0,-4)--(2,-4); \draw[black,fill=black] (0,0) circle (1.8ex); \draw[black,fill=black] (0,-4) circle (1.8ex); \end{scope}
\begin{scope}[shift={(22,1)}] \node[] at (0,-3) {$=((A_1A_2A_3)^R)^T(B_1B_2B_3)^R$}; \end{scope}
\end{scope} \end{tikzpicture}
\end{figure}

\begin{defn}
We define a scalar product on $\mathbb{R}^{n_{1}\times...\times n_{d}}$
as the standard scalar product on $\mathbb{R}^{n_{1}...n_{d}}$. This
induces a norm and the notion of orthogonality. We denote the \textit{TT
product} of the two tensors $A\in\mathbb{R}^{n_{1}\times...\times n_{i}\times k}$
and $B\in\mathbb{R}^{k\times n_{i+1}\times...\times n_{d}}$ by

\[
AB:=\left(A^{R}B^{L}\right)^{n_{1}\times...\times n_{d}}\in\mathbb{R}^{n_{1}\times...\times n_{d}}
\]
Its entries are
\[
(AB)(j_{1},...,j_{d}):=\sum_{m=1}^{k}A(j_{1},...,j_{i},m)B(m,j_{i+1},...,j_{d}).
\]
\end{defn}
Note that the TT product is associative. It is equivalent to the matrix
product if $A$ and $B$ are matrices.
\begin{defn}
Define the variety of TT tensors \cite{oseledets_TT} of order $d$
and dimensions $(n_{1},...,n_{d})$ of rank bounded by $\mathbf{k}=(k_{1},...,k_{d-1})$
as
\[
\mathcal{M}_{\leq(k_{1},...,k_{d-1})}^{n_{1}\times...\times n_{d}}:=\{A\in\mathbb{R}^{n_{1}\times...\times n_{d}}:\forall i:\operatorname{rank}\left(A^{(n_{1}...n_{i})\times(n_{i+1}...n_{d})}\right)\leq k_{i}\}
\]
and the manifold of TT tensors of order $d$ and dimensions $(n_{1},...,n_{d})$
of rank exactly $(k_{1},...,k_{d-1})$ as
\[
\mathcal{M}_{=(k_{1},...,k_{d-1})}^{n_{1}\times...\times n_{d}}:=\{A\in\mathbb{R}^{n_{1}\times...\times n_{d}}:\forall i:\operatorname{rank}\left(A^{(n_{1}...n_{i})\times(n_{i+1}...n_{d})}\right)=k_{i}\}.
\]
\end{defn}
Note that the variety of TT tensors of bounded rank $(k_{1},...,k_{d-1})$
is indeed an algebraic variety. Its defining polynomials are the determinants
of $(k_{i}+1)\times(k_{i}+1)$-minors of $A^{(n_{1}...n_{i})\times(n_{i+1}...n_{d})}$.
A proof for the TT manifold being a manifold can be found in \cite{Uschmajew.Vandereycken:2013}.
\begin{defn}
Define the tangent cone (also known as Bouligand contingent cone or
tangent semicone $C^{+}$ in \cite{oshea_wilson_tangent_semicones})
of an algebraic variety $\mathcal{M}\in\mathbb{R}^{N}$ at a (possibly
singular) point $A\subset\mathcal{M}$ as in \cite{uschmajew_schneider_matrix_case}
and \cite{oshea_wilson_tangent_semicones} as the set of all vectors
that are limits of secants through $A$:
\[
T_{A}\mathcal{M}:=\{\xi\in\mathbb{R}^{N}:\exists(x_{n})\subset\mathcal{M},(a_{n})\subset\mathbb{R}^{+}\text{ s.t. }x_{n}\rightarrow A,\ a_{n}(x_{n}-A)\rightarrow\xi\}.
\]
\end{defn}
\begin{rem}
Even though this will not affect the current work, we want to remark,
that in the complex setting this tangent cone is equivalent to the
algebraic tangent cone. See \cite{coxLittleShea,oshea_wilson_tangent_semicones}.
But we do not know of any proof of the corresponding statement for
the real case.
\end{rem}
A direct consequence from our parametrization will be, that in the
case of TT varieties the $a_{n}$ do not need to be positive. The
following example is included to address a certain peculiarity. In
contrast to Differential Geometry the description of the tangent cone
does not need all smooth arcs, but only analytic arcs. However the
set of first derivatives of analytic arcs $\left\{ v\in\mathbb{R}^{N}:\exists\gamma:[0,\varepsilon]\rightarrow\mathcal{M}\,\text{analytic}:\gamma(0)=A,\dot{\gamma}(0)=v\right\} $
does not suffice. To describe the set of directions of analytic arcs
we need to include the higher order derivatives.
\begin{example}
\label{exa:analytic curves}Consider the variety $\mathcal{M}:=\left\{ (x,y)\in\mathbb{R}^{2}:x^{2}=y^{3}\right\} $
and an analytic arc $\gamma$ with values in $\mathcal{M}$ such that
$\gamma(0)=(0,0)$. Then $\dot{\gamma}(0)$ always vanishes. Verify
this by plugging the analytic arc
\[
\gamma:t\mapsto(a_{1}t+a_{2}t^{2}+...,b_{1}t+b_{2}t^{2}+...)
\]
into the defining equation and compare coefficients. But the tangent
cone of $\mathcal{M}$ at $(0,0)$ is more than $\{(0,0)\}$. This
example also works in the complex case. Note that e.g. $\gamma:t\mapsto(t^{\frac{3}{2}},t)$
is not analytic.
\end{example}
In general the tangent cone can be defined by the first non-zero derivatives
\[
\{v\in\mathbb{R}^{N}:\exists n\in\mathbb{N},\gamma:[0,\varepsilon]\rightarrow\mathcal{M}\,\text{analytic}:
\]
\[
\gamma(0)=A,\,\gamma^{(n)}(0)=v\text{ and }\forall i<n:\gamma^{(i)}(0)=0\}
\]
of analytic arcs through the singular point. See \cite{oshea_wilson_tangent_semicones}
for a detailed discussion. Keeping in mind that any complex variety
can be rewritten as a real variety, this also works in the complex
case. In example \ref{exa:analytic curves} second derivatives suffice.
We will show in Corollary \ref{cor:first_der_suffice} that for the
TT variety first derivatives produce the tangent cone.

Lemmata 8 and 9 are trivial but essential for the proof of our main
result.
\begin{lem}
\[
\mathcal{M}_{\leq(k_{1},k_{2})}^{n_{1}\times n_{2}\times n_{3}}=\mathcal{M}_{\leq k_{1}}^{n_{1}\times(n_{2}n_{3})}\cap\mathcal{M}_{\leq k_{2}}^{(n_{1}n_{2})\times n_{3}}
\]
\end{lem}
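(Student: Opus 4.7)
The plan is to prove this by direct unpacking of definitions; the author flags both Lemmata 8 and 9 as ``trivial,'' and this one is essentially a set-theoretic tautology once the definitions are expanded.

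First I would rewrite the left-hand side using the definition of $\mathcal{M}_{\leq(k_{1},k_{2})}^{n_{1}\times n_{2}\times n_{3}}$ given earlier: a tensor $A\in\mathbb{R}^{n_{1}\times n_{2}\times n_{3}}$ lies in this set iff $\operatorname{rank}(A^{n_{1}\times(n_{2}n_{3})})\leq k_{1}$ \emph{and} $\operatorname{rank}(A^{(n_{1}n_{2})\times n_{3}})\leq k_{2}$. Then I would rewrite the right-hand side the same way: $A\in\mathcal{M}_{\leq k_{1}}^{n_{1}\times(n_{2}n_{3})}$ is precisely the first rank inequality (viewing $\mathbb{R}^{n_{1}\times(n_{2}n_{3})}$ canonically as $\mathbb{R}^{n_{1}\times n_{2}\times n_{3}}$ under the identification announced in the remark following the definition of ``tensor''), and $A\in\mathcal{M}_{\leq k_{2}}^{(n_{1}n_{2})\times n_{3}}$ is the second. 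Intersecting the two imposes both conditions simultaneously, which matches the left-hand side.

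The only thing one needs to be careful about is the implicit identification between the matrix spaces $\mathbb{R}^{n_{1}\times(n_{2}n_{3})}$, $\mathbb{R}^{(n_{1}n_{2})\times n_{3}}$ and the tensor space $\mathbb{R}^{n_{1}\times n_{2}\times n_{3}}$; I would briefly note that each of the bounded-rank matrix varieties, considered as a subset of $\mathbb{R}^{n_{1}n_{2}n_{3}}$ via matricization/tensorization, cuts out exactly one of the rank constraints, so membership in the intersection is equivalent to satisfying both constraints. This is exactly the definition of the left-hand side, giving the desired equality.

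I do not expect any real obstacle: no step requires a limit argument, minor expansion, or manipulation of the TT product; it is purely combining two conjunctions of rank inequalities. The main value of the lemma is not the proof but its use later as a structural identity in the inductive/binary splitting argument for general hierarchical formats, which is why the author records it as a lemma despite its triviality.
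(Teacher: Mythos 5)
Your proof is correct and matches the paper's approach exactly: the paper's proof is literally ``by definition,'' and your unpacking of the two rank conditions as a conjunction is precisely what that appeal to definition amounts to.
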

\begin{proof}
by definition.
\end{proof}
On a subset we can only define a subset of the secants and thus a
subset of the tangents.
\begin{lem}
\label{lem:T(cup)=00003DCup(T)}For every $A\in\mathcal{M}_{\leq(k_{1},k_{2})}^{n_{1}\times n_{2}\times n_{3}}$
we have
\[
T_{A}\mathcal{M}_{\leq(k_{1},k_{2})}^{n_{1}\times n_{2}\times n_{3}}\subset T_{A}\mathcal{M}_{\leq k_{1}}^{n_{1}\times(n_{2}n_{3})}
\]
 and thus 
\[
T_{A}\mathcal{M}_{\leq(k_{1},k_{2})}^{n_{1}\times n_{2}\times n_{3}}\subset T_{A}\mathcal{M}_{\leq k_{1}}^{n_{1}\times(n_{2}n_{3})}\cap T_{A}\mathcal{M}_{\leq k_{2}}^{(n_{1}n_{2})\times n_{3}}
\]
 
\end{lem}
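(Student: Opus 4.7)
The plan is to observe that this is a purely set-theoretic consequence of the preceding Lemma combined with the definition of the Bouligand tangent cone, so the whole argument is essentially a monotonicity statement: if $\mathcal{N}\subset\mathcal{M}$ then $T_A\mathcal{N}\subset T_A\mathcal{M}$ for any $A\in\mathcal{N}$.

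First I would invoke the previous Lemma, which gives
\[
\mathcal{M}_{\leq(k_{1},k_{2})}^{n_{1}\times n_{2}\times n_{3}}\subset \mathcal{M}_{\leq k_{1}}^{n_{1}\times(n_{2}n_{3})}
\qquad\text{and}\qquad
\mathcal{M}_{\leq(k_{1},k_{2})}^{n_{1}\times n_{2}\times n_{3}}\subset \mathcal{M}_{\leq k_{2}}^{(n_{1}n_{2})\times n_{3}},
\]
in fact these inclusions hold directly by definition, since demanding both rank bounds is stronger than demanding either one alone.

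Next, I would take any $\xi\in T_{A}\mathcal{M}_{\leq(k_{1},k_{2})}^{n_{1}\times n_{2}\times n_{3}}$ and unpack the definition: there exist a sequence $(x_n)\subset \mathcal{M}_{\leq(k_{1},k_{2})}^{n_{1}\times n_{2}\times n_{3}}$ and positive reals $(a_n)$ with $x_n\to A$ and $a_n(x_n-A)\to\xi$. By the inclusion above, the very same sequence $(x_n)$ lies inside $\mathcal{M}_{\leq k_{1}}^{n_{1}\times(n_{2}n_{3})}$, so it witnesses that $\xi\in T_{A}\mathcal{M}_{\leq k_{1}}^{n_{1}\times(n_{2}n_{3})}$. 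This is the first inclusion.

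The second inclusion follows at once by running the identical argument with $\mathcal{M}_{\leq k_{2}}^{(n_{1}n_{2})\times n_{3}}$ in place of $\mathcal{M}_{\leq k_{1}}^{n_{1}\times(n_{2}n_{3})}$ and intersecting the two conclusions. There is no real obstacle here; the only thing worth stressing is that no appeal to smoothness of $\mathcal{M}$ at $A$ is needed, since the Bouligand cone is defined for arbitrary (possibly singular) points and is manifestly monotone in the underlying set. As the author already signals in the preceding sentence, the lemma is really just the observation that restricting to a subset restricts the available secants.
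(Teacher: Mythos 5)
Your proof is correct and matches the paper's intent: the paper proves this ``by definition,'' prefacing it with the remark that restricting to a subset restricts the available secants, which is exactly the monotonicity argument you spell out. No discrepancy.
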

\begin{proof}
by definition.
\end{proof}
\begin{defn}
Define the range of $A\in\mathbb{R}^{n_{1}\times...\times n_{i}\times k}$
as
\[
\operatorname{range}(A):=\{a\in\mathbb{R}^{n_{1}\times...\times n_{i}}:\exists b\in\mathbb{R}^{k\times1}:a=Ab\}.
\]
\end{defn}

\section{Parametrization of the tangent cone}

We will recall the matrix case as a guiding example and as a necessary
prerequisite. Along the way, we will introduce all proof ideas needed
for the general case. Consider the matrix variety
\[
\mathcal{M}_{\leq k+s}^{n\times m},\quad s>0
\]
i.e. the set of $n\times m$ matrices of rank at most $k+s$. Let
$A\in\mathbb{R}^{n\times k}$ and $B\in\mathbb{R}^{k\times m}$ have
full rank. Then $AB$ has rank $k$ and is a singular point of $\mathcal{M}_{\leq k+s}^{n\times m}$.
As for example shown in \cite{uschmajew_schneider_matrix_case} (compare
also to \cite[p.256]{harris}), any tangent vector in the tangent
cone at $AB$ can be decomposed as
\[
\mathcal{X}=AY+XB+UV=\left(\begin{array}{ccc}
A & U & X\end{array}\right)\left(\begin{array}{c}
Y\\
V\\
B
\end{array}\right)
\]
with $U\in\mathbb{R}^{n\times s}$ and $V\in\mathbb{R}^{s\times m}$.
The converse is true by the following: The analytic arc
\[
\gamma:t\mapsto\left(\begin{array}{cc}
A+tX & tU\end{array}\right)\left(\begin{array}{c}
B+tY\\
V
\end{array}\right)
\]
lies in $\mathcal{M}_{\leq k+s}^{n\times m}$ and its derivative is
$\dot{\gamma}(0)=AY+XB+UV$. Use $\left(\gamma\left(\frac{1}{m}\right)\right)_{\mathbb{N}\ni m\geq N}$
to see, that $\dot{\gamma}(0)$ lies in the tangent cone. We can assume
$A^{T}X=0$ (i.e. the columns of $X$ are orthogonal to the columns
of $A$), $VB^{T}=0$ and either $A^{T}X=0$ or $YB^{T}=0$ by the
following argument. $P_{A}:=AA^{\dagger}$ is the orthogonal projector
onto $\operatorname{range}(A)$, where $A^{\dagger}$ denotes the
Moore-Penrose Pseudoinverse. Defining $\dot{U}:=A^{\dagger}U$ and
$\hat{U}:=(I-P_{A})U$ we can decompose
\begin{equation}
U=P_{A}U+(I-P_{A})U=AA^{\dagger}U+\hat{U}=A\dot{U}+\hat{U}\label{eq:pseudoinverses}
\end{equation}
where $\hat{U}$ is orthogonal to $A$, i.e. $A^{T}\hat{U}=0$. Decomposing
$V$ and $X$ in the same way, we can write $\mathcal{X}=AY+(A\dot{X}+\hat{X})B+(A\dot{U}+\hat{U})(\hat{V}+\dot{V}B)=A(Y+\dot{X}B+\dot{U}\hat{V}+\dot{U}\dot{V}B)+\hat{X}B+\hat{U}\hat{V}$.
We can furthermore assume $U$ and $V$ to have full rank by choosing
them from $\mathbb{R}^{n\times\tilde{s}}$ and $\mathbb{R}^{\tilde{s}\times m}$
respectively with $\tilde{s}$ minimal. We introduce a definition
for this, because we will need it in the tensor case.
\begin{defn}
Let $A\in\mathbb{R}^{n\times m}$ be a matrix of rank $k$ and $A_{1}\in\mathbb{R}^{n\times k}$,
$A_{2}\in\mathbb{R}^{k\times m}$ be such that $A=A_{1}A_{2}$. Call
for the purpose of this paper
\[
A_{1}Y+XA_{2}+UV
\]
 an \textit{$s$-decomposition} of the matrix $\mathcal{X}\in\mathbb{R}^{n\times m}$
(not every matrix is decomposable in this way) if $U\in\mathbb{R}^{n\times s}$,
$V\in\mathbb{R}^{s\times m}$, $A_{1}^{T}X=0$, $A_{1}^{T}U=0$, $VA_{2}^{T}=0$
and $U$ and $V$ have full rank.
\end{defn}
As a first step, we will prove the converse of our main result as
the proof is completely analogous to the matrix case.
\begin{lem}
\label{lem:subset_of_tangent_cone}Assume $A\in\mathcal{M}_{=(k_{1},...,k_{d-1})}^{n_{1}\times...\times n_{d}}$,
i.e. there are $A_{1}\in\mathbb{R}^{n_{1}\times k_{1}}$, $A_{i}\in\mathbb{R}^{k_{i-1}\times n_{i}\times k_{i}}\;\forall i=2,...,d-1$
and $A_{d}\in\mathbb{R}^{k_{d-1}\times n_{d}}$ such that $A=A_{1}...A_{d}$.
If a vector $\mathcal{X}$ can be factorized as
\[
\left(\begin{array}{ccc}
A_{1} & U_{1} & X_{1}\end{array}\right)\left(\begin{array}{ccc}
A_{2} & U_{2} & X_{2}\\
0 & Z_{2} & V_{2}\\
0 & 0 & A_{2}
\end{array}\right)...\left(\begin{array}{ccc}
A_{d-1} & U_{d-1} & X_{d-1}\\
0 & Z_{d-1} & V_{d-1}\\
0 & 0 & A_{d-1}
\end{array}\right)\left(\begin{array}{c}
X_{d}\\
V_{d}\\
A_{d}
\end{array}\right)
\]
with block matrix dimensions $(k_{i}+s_{i}+k_{i})\times(k_{i+1}+s_{i+1}+k_{i+1})$
then it is in the tangent cone of $\mathcal{M}_{\leq(k_{1}+s_{1},...,k_{d-1}+s_{d-1})}^{n_{1}\times...\times n_{d}}$
at $A_{1}...A_{d}$.
\end{lem}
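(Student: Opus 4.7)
The plan is to mimic the matrix case by constructing an explicit analytic arc $\gamma:[-\varepsilon,\varepsilon]\rightarrow\mathcal{M}_{\leq(k_1+s_1,\ldots,k_{d-1}+s_{d-1})}^{n_1\times\ldots\times n_d}$ with $\gamma(0)=A$ and $\dot{\gamma}(0)=\mathcal{X}$. Then the sequence $x_n:=\gamma(1/n)$ together with $a_n:=n$ witnesses $\mathcal{X}\in T_A\mathcal{M}_{\leq(k_1+s_1,\ldots,k_{d-1}+s_{d-1})}^{n_1\times\ldots\times n_d}$ by definition.

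The arc I would use collapses the $3\times 3$ block cores of the factorization into $2\times 2$ block cores with $t$-dependent entries:
\[
\gamma_1(t):=\begin{pmatrix}A_1+tX_1 & tU_1\end{pmatrix},\quad \gamma_d(t):=\begin{pmatrix}A_d+tX_d\\ V_d\end{pmatrix},
\]
and for $2\leq i\leq d-1$
\[
\gamma_i(t):=\begin{pmatrix}A_i+tX_i & tU_i\\ V_i & Z_i\end{pmatrix},
\]
interpreted as tensor cores with the obvious block dimensions $k_{i-1}+s_{i-1}$ and $k_i+s_i$. Set $\gamma(t):=\gamma_1(t)\cdots\gamma_d(t)$ using the TT product. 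Every entry of every core is polynomial in $t$, so $\gamma$ is analytic. The $i$-th matricization of $\gamma(t)$ factors through a space of dimension $k_i+s_i$ by construction, so $\gamma(t)\in\mathcal{M}_{\leq(k_1+s_1,\ldots,k_{d-1}+s_{d-1})}^{n_1\times\ldots\times n_d}$ for every $t$. At $t=0$ the off-diagonal $tU_i$ entries vanish while the $V_i$ entries remain; multiplying out, the upper-triangular block structure forces all contributions involving the second block row/column to vanish, so that $\gamma(0)=A_1\cdots A_d=A$.

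It remains to identify $\dot{\gamma}(0)$ with the $3$-block expression in the statement. By the product rule
\[
\dot{\gamma}(0)=\sum_{i=1}^{d}\gamma_1(0)\cdots\gamma_{i-1}(0)\,\dot{\gamma}_i(0)\,\gamma_{i+1}(0)\cdots\gamma_d(0),
\]
and on each summand one reads off which indices of the $3$-block factorization get activated: the derivative $\dot{\gamma}_i(0)$ kills the block column containing $V_i$ and the block row containing $U_{i-1}$, leaving exactly the combinations $X_i$ and $U_i$ in the $i$-th slot. Expanding the product of block cores in the lemma statement from left to right and collecting terms by which of $X_j$, $U_j V_{j+1}\cdots$, or $A_j$ is chosen in each position yields precisely the same sum of monomials; I would verify this by induction on $d$, noting that the $d=2$ (matrix) case is already handled in the excerpt, and that extending from $d$ to $d+1$ amounts to substituting the inductive identity into the leftmost or rightmost factor.

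The main obstacle is the bookkeeping in this last step: the $3$-block factorization with its strictly upper-triangular zeros is a compact way of packaging the combinatorial fact that each term in $\dot{\gamma}(0)$ corresponds to choosing exactly one core position to differentiate, with every earlier core evaluated at its block $A_j$ and every later core contributing either $A_j$ or the off-diagonal coupling $U_{j-1}Z_j\cdots Z_{j'}V_{j'+1}$. Making this correspondence rigorous is the only non-routine part; everything else (analyticity, membership in the variety, $\gamma(0)=A$) is immediate from the block shape.
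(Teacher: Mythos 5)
Your proof is correct and takes essentially the same approach as the paper: the paper's arc is $\gamma(t)=\left(\begin{array}{cc}A_{1}+tX_{1} & U_{1}\end{array}\right)\left(\begin{array}{cc}A_{2}+tX_{2} & U_{2}\\ tV_{2} & Z_{2}\end{array}\right)\cdots\left(\begin{array}{c}A_{d}+tX_{d}\\ tV_{d}\end{array}\right)$, i.e.\ the $t$-factor is placed on the $V_{i}$'s rather than on the $U_{i}$'s as you do, but both (symmetric) choices give the same $\dot{\gamma}(0)$. One small slip: at $t=0$ your $2\times 2$ block cores $\left(\begin{array}{cc}A_{i} & 0\\ V_{i} & Z_{i}\end{array}\right)$ are \emph{lower}-triangular, not upper-triangular, and it is this lower-triangular shape together with the leading row $(A_{1},0)$ that kills all $V_{i}$-contributions and forces $\gamma(0)=A$.
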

\begin{figure}[h]

\caption{\label{fig:Decomposition}Decomposition of a tangent vector}
\begin{tikzpicture}[scale=0.5]
\newcommand{\cuboid}[6]{ \begin{scope}[shift={(#4,#5)}] \draw[fill=white](#1,#2)--(#1,0)--(0,0)--(0,#2)--cycle; \draw[fill=white](#1,0)--(#1+#3/2,#3/2)--(#1+#3/2,#2+#3/2)--(#1,#2)--cycle; \draw[fill=white](#1+#3/2,#2+#3/2)--(#3/2,#2+#3/2)--(0,#2)--(#1,#2)--cycle; \draw[draw=black](#1,#2)--(#1,0)--(0,0)--(0,#2)--(#1,#2)--(#1+#3/2,#2+#3/2); \draw[draw=black](#1,0)--(#1+#3/2,#3/2)--(#1+#3/2,#2+#3/2)--(#3/2,#2+#3/2)--(0,#2); \end{scope} }
\newcommand{\benSquare}[5]{ \begin{scope}[shift={(#3,#4)}] \draw[fill=white](#1,#2)--(#1,0)--(0,0)--(0,#2)--cycle; \end{scope} }
%FIRST MATRIX
\benSquare{2}{6}{0}{0}; \node at (1,3) {$A_1$}; \node [anchor = south] at (1,6) {$k_1$};
\benSquare{1}{6}{2}{0}; \node at (2.5,3) {$U_1$}; \node [anchor = south] at (2.5,6) {$s_1$};
\benSquare{2}{6}{3}{0}; \node at (4,3) {$X_1$}; \node [anchor = south] at (4,6) {$k_1$};
%FIRST TENSOR
\begin{scope}[shift = {(0.5,0.5)}] \cuboid{2}{2}{2}{6}{0}; \node at (7,1) {$0$}; \node [anchor = east] at (6.2,1) {$k_1$}; \node [anchor = north] at (7,0) {$k_2$};
\cuboid{1}{2}{2}{8}{0}; \node at (8.5,1) {$0$}; \node [anchor = north] at (8.5,-0.15) {$s_2$};
\cuboid{2}{2}{2}{9}{0}; \node at (10,1) {$A_2$}; \node [anchor = north] at (10,0) {$k_2$};
\begin{scope}[shift = {(0,2)}] \cuboid{2}{1}{2}{6}{0}; \node at (7,0.5) {$0$}; \node [anchor = east] at (6.2,0.5) {$\cdot \,\, s_1$};
\cuboid{1}{1}{2}{8}{0}; \node at (8.5,0.5) {$Z_2$};
\cuboid{2}{1}{2}{9}{0}; \node at (10,0.5) {$V_2$}; \end{scope}
\begin{scope}[shift = {(0,3)}] \cuboid{2}{2}{2}{6}{0}; \node at (7,1) {$A_2$}; \node [anchor = east] at (6.2,1) {$k_1$};
\cuboid{1}{2}{2}{8}{0}; \node at (8.5,1) {$U_2$};
\cuboid{2}{2}{2}{9}{0}; \node at (10,1) {$X_2$}; \end{scope}
\end{scope}
%LAST TENSOR
\begin{scope}[shift = {(8,0.5)}] \cuboid{2}{2}{2}{6}{0}; \node at (7,1) {$0$};
\cuboid{1}{2}{2}{8}{0}; \node at (8.5,1) {$0$};
\cuboid{2}{2}{2}{9}{0}; \node at (10,1) {$A_{d-1}$};
\begin{scope}[shift = {(0,2)}] \cuboid{2}{1}{2}{6}{0}; \node at (7,0.5) {$0$}; \node [anchor = east] at (5.8,0.5) {$...$};
\cuboid{1}{1}{2}{8}{0}; \node at (8.5,0.5) {$...$};
\cuboid{2}{1}{2}{9}{0}; \node at (10,0.5) {$V_{d-1}$}; \end{scope}
\begin{scope}[shift = {(0,3)}] \cuboid{2}{2}{2}{6}{0}; \node at (7,1) {$A_{d-1}$};
\cuboid{1}{2}{2}{8}{0}; \node at (8.5,1) {$...$};
\cuboid{2}{2}{2}{9}{0}; \node at (10,1) {$X_2$}; \end{scope}
\end{scope}
%LAST MATRIX
\begin{scope}[shift = {(21,0.5)}]
\benSquare{3}{2}{0}{0};
\node at (1.5,1) {$A_d$};
\benSquare{3}{1}{0}{2};
\node at (1.5,2.5) {$V_d$};
\node[anchor = east] at (-0.1,2.5) {$\cdot$};
\benSquare{3}{2}{0}{3};
\node at (1.5,4) {$X_d$};
\end{scope}
\end{tikzpicture}

\end{figure}

\begin{proof}
The curve
\[
\gamma:\left(-\varepsilon,\varepsilon\right)\rightarrow\mathcal{M}_{\leq(k_{1}+s_{1},...,k_{d-1}+s_{d-1})}^{n_{1}\times...\times n_{d}}:t\mapsto
\]
\[
\left(\begin{array}{cc}
A_{1}+tX_{1} & U_{1}\end{array}\right)\left(\begin{array}{cc}
A_{2}+tX_{2} & U_{2}\\
tV_{2} & Z_{2}
\end{array}\right)...\left(\begin{array}{cc}
A_{d-1}+tX_{d-1} & U_{d-1}\\
tV_{d-1} & Z_{d-1}
\end{array}\right)\left(\begin{array}{c}
A_{d}+tX_{d}\\
tV_{d}
\end{array}\right)
\]
is analytic and has $\mathcal{X}$ as its first derivative. See this
by differentiating $\gamma$ in $t=0$ using the product rule. For
the basic definition of tangent vector use the sequence $\left(\gamma\left(\frac{1}{m}\right)\right)_{\mathbb{N}\ni m\geq N}$.
\end{proof}
What follows is a technical lemma that facilitates proving both, the
case for order $3$ TT varieties as well as the inductive step for
arbitrary order. Its first two assumptions (equations \ref{eq:firstDecomposition}
and \ref{eq:secondDecomposition}) arrive from applying the matrix
version to the two matricizations with respect to index $1$ and $3$.
The idea of the proof is the following: Represent an arbitrary tangent
vector as the tangent vector of the matricizations using Lemma \ref{lem:T(cup)=00003DCup(T)}.
Then decompose using the result on matrix tangent cones above. Orthogonalizing
with respect to $A_{1}$ and $A_{3}$ allows us to decompose the tangent
vector into an orthogonal sum and compare the orthogonal components
seperately.
\begin{lem}
\label{technical lemma}Let $A\in\mathcal{M}_{=(k_{1},k_{2})}^{n_{1}\times n_{2}\times n_{3}}$
be a singular point in $\mathcal{M}_{\leq(k_{1}+s_{1},k_{2}+s_{2})}^{n_{1}\times n_{2}\times n_{3}}$
($s_{1},s_{2}\geq0$) and let $A_{1}\in\mathbb{R}^{n_{1}\times k_{1}}$,
$A_{2}\in\mathbb{R}^{k_{1}\times n_{2}\times k_{2}}$ and $A_{3}\in\mathbb{R}^{k_{2}\times n_{3}}$
be three tensors such that $A_{1}A_{2}A_{3}=A$. Assume further the
orthogonality of $A_{1}$ and $A_{2}$, $A_{1}^{T}A_{1}=I$, $\left(A_{2}^{R}\right)^{T}A_{2}^{R}=I$.
Let $\mathcal{X}\in\mathbb{R}^{n_{1}\times n_{2}\times n_{3}}$ be
a tensor that admits the $\tilde{s}_{1}$-decomposition
\begin{equation}
\mathcal{X}=A_{1}\mathbf{Y}+XA_{2}A_{3}+U\mathbf{V}\label{eq:firstDecomposition}
\end{equation}
and the $\tilde{s}_{2}$-decomposition
\begin{equation}
\mathcal{X}=A_{1}A_{2}T+\mathbf{S}A_{3}+\mathbf{O}P\label{eq:secondDecomposition}
\end{equation}
with $\tilde{s}_{1}\leq s_{1}$ and $\tilde{s}_{2}\leq s_{2}$. Then
$\mathcal{X}$ is decomposable as
\begin{equation}
\mathcal{X}=\left(\begin{array}{ccc}
A_{1} & U & X\end{array}\right)\left(\begin{array}{ccc}
A_{2} & \dot{O} & \dot{S}\\
0 & Z_{2} & \dot{V}\\
0 & 0 & A_{2}
\end{array}\right)\left(\begin{array}{c}
T\\
P\\
A_{3}
\end{array}\right)\label{eq:star}
\end{equation}
with $\dot{O}=A_{1}^{\dagger}\mathbf{O}$, $\dot{S}=A_{1}^{\dagger}\mathbf{S}$
and $\dot{V}=\mathbf{V}A_{3}^{\dagger}$. In particular we have the
orthogonality statements $\left(A_{2}^{R}\right)^{T}\dot{O}^{R}=0$,
$\left(A_{2}^{R}\right)^{T}\dot{S}^{R}=0$, $\left(\dot{V}A_{3}\right)^{L}\left(\left(A_{2}A_{3}\right)^{L}\right)^{T}=0$
and that $Z_{2}P+\dot{V}A_{3}$ and $A_{1}\dot{O}+UZ_{2}$ have full
rank and the equivalence

\[
\left(\begin{array}{ccc}
A_{1} & U & X\end{array}\right)\left(\begin{array}{ccc}
A_{2} & \dot{O} & \dot{S}\\
0 & Z_{2} & \dot{V}\\
0 & 0 & A_{2}
\end{array}\right)=\left(\begin{array}{ccc}
A_{1}A_{2} & \mathbf{S} & \mathbf{O}\end{array}\right).
\]
\end{lem}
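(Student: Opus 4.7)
The plan is to combine the two given $s$-decompositions by projecting both onto the orthogonal complement of $\operatorname{range}(A_1)$ along the first tensor index, then further splitting the resulting identity via two complementary projections in the third index. I begin by naming the relevant parallel and perpendicular parts:
\[
\dot O := A_1^T\mathbf{O}, \quad \hat O := \mathbf{O}-A_1\dot O, \quad \dot S := A_1^T\mathbf{S}, \quad \hat S := \mathbf{S}-A_1\dot S,
\]
\[
\dot V := \mathbf{V}A_3^T(A_3A_3^T)^{-1}, \quad \hat V := \mathbf{V}-\dot V A_3.
\]
The three orthogonality conclusions follow at once: using the factorization $(A_1A_2)^R = (A_1\otimes I_{n_2})A_2^R$ together with $A_1^TA_1=I$, the hypothesis $((A_1A_2)^R)^T\mathbf O^R=0$ becomes $(A_2^R)^T\dot O^R=0$, and likewise for $\dot S$; for the third, $(\dot V A_3)(A_2A_3)^T = \mathbf VA_3^T A_2^T = \mathbf V(A_2A_3)^T = 0$.

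The key step is to apply $I-A_1A_1^T$ to the first index of both $s$-decompositions: on (\ref{eq:firstDecomposition}) this kills the $A_1\mathbf Y$ term using $A_1^TX=A_1^TU=0$, and on (\ref{eq:secondDecomposition}) it kills $A_1A_2T$ while stripping $\mathbf S$ and $\mathbf O$ of their $A_1$-components. Equating the two outputs yields
\[
XA_2A_3 + U\mathbf V = \hat S A_3 + \hat O P.
\]
I then project in the $n_3$ index in two complementary ways. Right-contracting with $A_3^T$ and using $PA_3^T=0$ together with invertibility of $A_3A_3^T$ gives $\hat S = XA_2 + U\dot V$. Right-contracting instead with $I-A_3^T(A_3A_3^T)^{-1}A_3$ (the projector onto $\operatorname{null}(A_3)$) annihilates $XA_2A_3$ and $\hat S A_3$, whose $n_3$-slices lie in $\operatorname{range}(A_3^T)$, and leaves the crucial identity
\[
U\hat V = \hat O P.
\]
Setting $Z_2:=\hat VP^T(PP^T)^{-1}$, a direct verification using $U\hat V=\hat OP$ (together with the observation that $\hat V = U^\dagger\hat O P$ places the $n_3$-slices of $\hat V$ in $\operatorname{range}(P^T)$) gives both $UZ_2=\hat O$ and $Z_2P=\hat V$.

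With these identifications, the block decomposition (\ref{eq:star}) is a direct column-by-column expansion: the product equals $A_1A_2T + (A_1\dot O+UZ_2)P + (A_1\dot S+U\dot V+XA_2)A_3 = A_1A_2T + \mathbf OP + \mathbf SA_3 = \mathcal X$. The same computation reads off the claimed equivalence, identifying the second and third columns of the middle block product as $\mathbf O$ and $\mathbf S$. The rank assertions then follow from $A_1\dot O+UZ_2=\mathbf O$ and $Z_2P+\dot VA_3=\hat V+\dot VA_3=\mathbf V$, both of which are full rank by the $s$-decomposition hypotheses. I expect the main obstacle to be the coupling through the shared middle block $Z_2$: neither $s$-decomposition alone controls it. The resolution is that $\hat SA_3$ and $\hat OP$ occupy orthogonal $n_3$-subspaces, so a single identity double-projects into two identities, which together pin $Z_2$ down simultaneously as the image of $\hat O$ inside $\operatorname{range}(U)$ and as the image of $\hat V$ inside $\operatorname{range}(P^T)$.
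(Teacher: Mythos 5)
Your proof is correct and takes essentially the same approach as the paper: both arguments decompose the two $s$-decompositions orthogonally with respect to $\operatorname{range}(A_1)$ in the first index and $\operatorname{range}(A_3^T)$ in the last, extract the identities $\hat S = XA_2 + U\dot V$ and $U\hat V = \hat O P$, set $Z_2$ to the common value $U^\dagger\hat O = \hat V P^\dagger$, and then assemble the block factorization by reattaching the $\operatorname{range}(A_1)$ part from the $\tilde{s}_2$-decomposition. Your presentation via the explicit projections $I-A_1A_1^T$ and $I-A_3^\dagger A_3$ is a modest streamlining of the paper's side-by-side comparison of all four orthogonal summands, but the underlying mechanism is identical.
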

\begin{proof}
Define $\dot{Y}:=\mathbf{Y}A_{3}^{\dagger}$, $\dot{V}:=\mathbf{V}A_{3}^{\dagger}$,
$\dot{T}:=TA_{3}^{\dagger}$, $\dot{S}:=A_{1}^{\dagger}\mathbf{S}$
and $\dot{O}:=A_{1}^{\dagger}\mathbf{O}$, where $A^{\dagger}$ denotes
the Moore-Penrose Pseudoinverse and can be replaced by $A^{T}$ for
orthogonal matrices and by $A^{T}(AA^{T})^{-1}$ for full rank matrices
with more columns than rows. Then we can decompose $\mathbf{Y}$,
$\mathbf{V}$, $\mathbf{S}$, $\mathbf{O}$ and $T$ into

\[
\mathbf{Y}=\hat{Y}+\dot{Y}A_{3}\text{, }\mathbf{V}=\hat{V}+\dot{V}A_{3}\text{, }\mathbf{O}=\hat{O}+A_{1}\dot{O}\text{, }\mathbf{S}=\hat{S}+A_{1}\dot{S}\text{, }T=\hat{T}+\dot{T}A_{3}
\]
where the hat-wearing variables are orthogonal to $A_{1}$ or $A_{3}$
respectively: 
\[
\hat{Y}A_{3}^{T}=0,\,\hat{V}A_{3}^{T}=0,\,A_{1}^{T}\hat{O}=0,\,A_{1}^{T}\hat{S}=0,\,\hat{T}A_{3}^{T}=0.
\]
Then we can write the tangent vector as an orthogonal sum (w.r.t.
the scalar product on $\mathbb{R}^{n_{1}n_{2}n_{3}}$) in the four
spaces
\[
\operatorname{range}(A_{1})\otimes\mathbb{R}^{n_{2}}\otimes\operatorname{range}(A_{3}^{T}),
\]
\[
\operatorname{range}(A_{1})\otimes\mathbb{R}^{n_{2}}\otimes\operatorname{range}(A_{3}^{T})^{\perp},
\]
\[
\operatorname{range}(A_{1})^{\perp}\otimes\mathbb{R}^{n_{2}}\otimes\operatorname{range}(A_{3}^{T}),
\]
\[
\operatorname{range}(A_{1})^{\perp}\otimes\mathbb{R}^{n_{2}}\otimes\operatorname{range}(A_{3}^{T})^{\perp}.
\]
Rewriting equations \ref{eq:firstDecomposition} and \ref{eq:secondDecomposition}
yields

\begin{equation}
\mathcal{X}=A_{1}\dot{Y}A_{3}+A_{1}\hat{Y}+(XA_{2}+U\dot{V})A_{3}+U\hat{V}.\label{eq:1}
\end{equation}
and
\begin{equation}
\mathcal{X}=A_{1}(A_{2}\dot{T}+\dot{S})A_{3}+A_{1}(A_{2}\hat{T}+\dot{O}P)+\hat{S}A_{3}+\hat{O}P.\label{eq:2}
\end{equation}
respectively. Both representations need to be equal. Because they
are orthogonal sums in the same four spaces, each summand has to be
equal to the corresponding summand in the other sum. In particular
we have
\[
\hat{O}P=U\hat{V}.
\]
By defining $Z_{2}:=U^{\dagger}\hat{O}$, we can write
\begin{equation}
U\hat{V}=\hat{O}P=UZ_{2}P\label{eq:3}
\end{equation}
and see that $Z_{2}=\hat{V}P^{\dagger}$ (by multiplying equation
\ref{eq:3} by the full rank matrices $U^{\dagger}$ and $P^{\dagger}$).
Using the first and second summand of equation \ref{eq:2}, the third
summand of equation \ref{eq:1} and equation \ref{eq:3} we assemble
the desired representation from equation \ref{eq:star}

\[
\mathcal{X}=A_{1}\dot{S}A_{3}+A_{1}A_{2}T+A_{1}\dot{O}P+XA_{2}A_{3}+U\dot{V}A_{3}+UZ_{2}P.
\]
with all the desired properties. See this in the following way: $A_{1}\dot{O}+UZ_{2}=A_{1}\dot{O}+\hat{O}=\mathbf{O}$
is orthogonal to $A_{1}A_{2}$, therefore $0=\left(\left(A_{1}A_{2}\right)^{R}\right)^{T}\mathbf{O}^{R}=\left(\left(A_{1}A_{2}\right)^{R}\right)^{T}\left(A_{1}\dot{O}+UZ_{2}\right)^{R}=\left(\left(A_{1}A_{2}\right)^{R}\right)^{T}\left(A_{1}\dot{O}\right)^{R}=\left(A_{2}^{R}\right)^{T}\dot{O}^{R}$.
And analogously for $Z_{2}P+\dot{V}A_{3}=\mathbf{V}$ and $A_{1}\dot{S}+U\dot{V}+XA_{2}=\mathbf{S}$
(by $XA_{2}+U\dot{V}=\hat{S}$ from equations \ref{eq:1} and \ref{eq:2}).
\end{proof}
We can now state our main result for arbitrary TT varieties.
\begin{thm}
\label{thm:arbitrary_d}Let $A\in\mathcal{M}_{=(k_{1},...,k_{d-1})}^{n_{1}\times...\times n_{d}}$
be a singular point in $\mathcal{M}_{\leq(k_{1}+s_{1},...,k_{d-1}+s_{d-1})}^{n_{1}\times...\times n_{d}}$
($s_{i}\geq0$) and let $A_{1}\in\mathbb{R}^{n_{1}\times k_{1}}$,
$A_{2}\in\mathbb{R}^{k_{1}\times n_{2}\times k_{2}}$,... and $A_{d}\in\mathbb{R}^{k_{d-1}\times n_{d}}$
be tensors such that $A_{1}...A_{d}=A$ and $A_{1}^{T}A_{1}=I$, $\left(A_{i}^{R}\right)^{T}A_{i}^{R}=I\;\forall i=2,...,d-1$.
Then any vector in the tangent cone of $\mathcal{M}_{\leq(k_{1}+s_{1},...,k_{d-1}+s_{d-1})}^{n_{1}\times...\times n_{d}}$
at the point $A_{1}...A_{d}$ can be written as the TT tensor

\[
\left(\begin{array}{ccc}
A_{1} & U_{1} & X_{1}\end{array}\right)\left(\begin{array}{ccc}
A_{2} & U_{2} & X_{2}\\
0 & Z_{2} & V_{2}\\
0 & 0 & A_{2}
\end{array}\right)...\left(\begin{array}{ccc}
A_{d-1} & U_{d-1} & X_{d-1}\\
0 & Z_{d-1} & V_{d-1}\\
0 & 0 & A_{d-1}
\end{array}\right)\left(\begin{array}{c}
X_{d}\\
V_{d}\\
A_{d}
\end{array}\right)
\]
where $\left(A_{i}^{R}\right)^{T}U_{i}^{R}=0\;\forall i$, $\left(A_{i}^{R}\right)^{T}X_{i}^{R}=0\;\forall i\neq d$,
$\left(V_{i}A_{i+1}...A_{d}\right)^{L}\left(\left(A_{i}...A_{d}\right)^{L}\right)^{T}=0\;\forall i$.
\end{thm}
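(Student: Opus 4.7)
The plan is to prove the theorem by induction on the tensor order $d$, with Lemma~\ref{technical lemma} as the main structural tool at each step and Lemma~\ref{lem:T(cup)=00003DCup(T)} providing access to matrix-case decompositions via matricizations.

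The case $d=2$ is exactly the matrix $s$-decomposition recalled just above Lemma~\ref{lem:subset_of_tangent_cone}. For the first nontrivial case $d=3$, Lemma~\ref{lem:T(cup)=00003DCup(T)} places $\mathcal{X}$ in the tangent cones of both matricizations $\mathcal{M}_{\le k_1+s_1}^{n_1\times n_2n_3}$ and $\mathcal{M}_{\le k_2+s_2}^{n_1n_2\times n_3}$; applying the matrix case to each produces the two $s$-decompositions \eqref{eq:firstDecomposition} and \eqref{eq:secondDecomposition}, which are precisely the hypotheses of Lemma~\ref{technical lemma}. Its conclusion is the claimed block form, with the stated orthogonalities built in.

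For $d\ge 4$ I would regroup $A=A_1\cdot\tilde{A}\cdot A_d$ with $\tilde{A}:=A_2\cdots A_{d-1}$ and apply the $d=3$ argument to this order-$3$ view. Associativity of the TT product together with the left-orthogonality $(A_i^R)^T A_i^R=I$ gives $(\tilde{A}^R)^T\tilde{A}^R=I$, so Lemma~\ref{technical lemma} applies and produces the outermost block
\[
\mathcal{X}=\begin{pmatrix}A_1 & U_1 & X_1\end{pmatrix}\begin{pmatrix}\tilde{A} & \dot{O} & \dot{S}\\ 0 & Z & \dot{V}\\ 0 & 0 & \tilde{A}\end{pmatrix}\begin{pmatrix}X_d\\ V_d\\ A_d\end{pmatrix}.
\]
The equivalence $(A_1\ U_1\ X_1)(\cdots)=(A_1\tilde{A}\ \mathbf{S}\ \mathbf{O})$ from Lemma~\ref{technical lemma} then exhibits $\mathbf{S}$ and $\mathbf{O}$ as tangent vectors at $A_1\cdots A_{d-1}$ to suitable order-$(d-1)$ TT varieties, so that the inductive hypothesis applies to each. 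Combining the resulting factorizations of $\mathbf{S}$, $\mathbf{O}$ and of $A$ itself into the block structure of the middle factor produces the nested interior blocks $\begin{pmatrix}A_i & U_i & X_i\\ 0 & Z_i & V_i\\ 0 & 0 & A_i\end{pmatrix}$ for $i=2,\ldots,d-1$.

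The main obstacle will be justifying that $\mathbf{S}$ and $\mathbf{O}$ are genuine tangent vectors to the appropriate lower-order TT varieties, so that the inductive hypothesis legitimately applies, and then threading the three inductive outputs into a single coherent $3\times 3$ block at each interior level. Once that is in place, the orthogonality relations $(A_i^R)^T U_i^R=0$, $(A_i^R)^T X_i^R=0$ and $(V_i A_{i+1}\cdots A_d)^L\bigl((A_i\cdots A_d)^L\bigr)^T=0$ are inherited directly from the Moore--Penrose projections used in the proof of Lemma~\ref{technical lemma} and need only be verified one level at a time.
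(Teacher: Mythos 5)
Your base case and your identification of Lemma~\ref{technical lemma} as the engine are correct, and the observation that left-orthogonality composes so $(\tilde{A}^R)^T\tilde{A}^R=I$ is sound. But the inductive strategy diverges from the paper's, and the divergence is exactly where the gap opens up. The paper inducts \emph{left to right}: having decomposed cores $1,\ldots,i$, it contracts them into a three-block factor $\left(\begin{array}{ccc}A_1\cdots A_i & B & C\end{array}\right)$ and observes that $B^R$ and $C^R$ are automatically orthogonal to $(A_1\cdots A_i)^R$, which hands it the first hypothesis \eqref{eq:firstDecomposition} of Lemma~\ref{technical lemma} for $\mathcal{M}_{\leq(k_i+s_i,\,k_{i+1}+s_{i+1})}^{(n_1\cdots n_i)\times n_{i+1}\times(n_{i+2}\cdots n_d)}$; the second hypothesis \eqref{eq:secondDecomposition} is the matrix theorem applied to the $(i+1)$-th matricization; Lemma~\ref{technical lemma} then peels off one more core. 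At every step the object being decomposed is the full tangent vector $\mathcal{X}$, and the rank constraint needed is just one more matricization of $\mathcal{X}$, which is available by Lemma~\ref{lem:T(cup)=00003DCup(T)}.

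Your ``middle-out'' recursion instead produces a middle block $\mathbf{C}$ of shape $(k_1+s_1+k_1)\times(n_2\cdots n_{d-1})\times(k_{d-1}+s_{d-1}+k_{d-1})$ and would need to further factor $\mathbf{C}$ into a chain of $d-2$ upper-triangular $3\times 3$ cores. But $\mathbf{C}$ is not a tangent vector to anything, and its entries $\tilde{A},\dot{O},\dot{S},Z,\dot{V}$ carry no a priori control of the inner TT ranks $k_2,\ldots,k_{d-2}$. Your alternative of invoking the inductive hypothesis on $\mathbf{S}$ and $\mathbf{O}$ does not work either: these are factors of a matrix $s$-decomposition of the $(d-1)$-th matricization, not tangent vectors of lower-order TT varieties, and $\mathbf{O}$ does not even have matching dimensions (its last index has size $\tilde{s}_{d-1}$, not $k_{d-1}$). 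Even if each of $A$, $\mathbf{S}$, $\mathbf{O}$ could separately be given a TT decomposition, you would still need the three decompositions to share a single compatible set of intermediate cores $A_i,U_i,X_i,Z_i,V_i$ — a consistency problem the paper's one-core-at-a-time induction never encounters, because it only ever solves for one new core against one already-fixed left factor. What you describe as ``the main obstacle'' is therefore not a detail to be filled in later; it is the substance of the argument, and the proposal as written does not contain it. The fix is to abandon the middle-out recursion and run the left-to-right induction exactly as in the paper, using the contraction \eqref{eq:BigStar} to set up the hypotheses of Lemma~\ref{technical lemma} at each step.
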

\begin{proof}
The idea of the proof is illustrated in figure \ref{fig:proof-idea}.
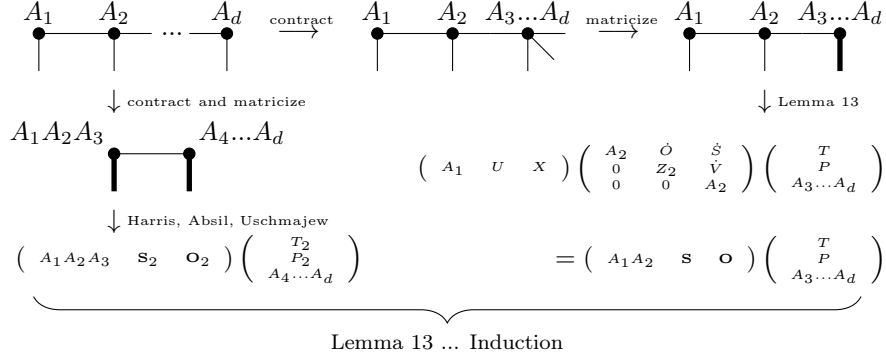
\begin{figure}
\caption{\label{fig:proof-idea}proof of the theorem}
\begin{tikzpicture}[scale=0.5]
\draw[black,fill=black] (0,0) circle (1ex); \node[anchor=south] at (0,0) {$A_2$};
\draw[black,fill=black] (3,0) circle (1ex); \node[anchor=south] at (3,0) {$A_d$};
\draw[black,fill=black] (-2,0) circle (1ex); \node[anchor=south] at (-2,0) {$A_1$};
\draw(-2,-1)--(-2,0)--(0,0)--(1,0); \draw(2,0)--(3,0)--(3,-1); \node[anchor = west] at (1,0) {...};
\draw(0,0)--(0,-1);
\begin{scope}[shift = {(5,0)}] \node[] at (0,0) {$\longrightarrow{}$}; \node[anchor=south] at (0,0) {\tiny contract}; \end{scope}
\begin{scope}[shift = {(9,0)}] \draw[black,fill=black] (0,0) circle (1ex); \node[anchor=south] at (0,0) {$A_2$};
\draw[black,fill=black] (2,0) circle (1ex); \node[anchor=south] at (2,0) {$A_3...A_d$};
\draw[black,fill=black] (-2,0) circle (1ex); \node[anchor=south] at (-2,0) {$A_1$};
\draw(-2,-1)--(-2,0)--(0,0)--(1,0); \draw(1,0)--(2,0)--(2,-1); \draw(2,0)--(3,0); \draw(2,0)--(2.7,-0.7);
\draw(0,0)--(0,-1); \end{scope}
\begin{scope}[shift = {(13.5,0)}] \node[] at (0,0) {$\longrightarrow{}$}; \node[anchor=south] at (0,0) {\tiny matricize}; \end{scope}
\begin{scope}[shift = {(17.3,0)}] \draw[black,fill=black] (0,0) circle (1ex); \node[anchor=south] at (0,0) {$A_2$};
\draw[black,fill=black] (2,0) circle (1ex); \node[anchor=south] at (2,0) {$A_3...A_d$};
\draw[black,fill=black] (-2,0) circle (1ex); \node[anchor=south] at (-2,0) {$A_1$};
\draw(-2,-1)--(-2,0)--(0,0)--(1,0); \draw(1,0)--(2,0); \draw[line width=2pt](2,0)--(2,-1);
\draw(0,0)--(0,-1); \end{scope}
\begin{scope}[shift = {(17.4,-1.8)}] \node[] at (0,0) {$\downarrow{}$}; \node[anchor=west] at (0,0) {\tiny Lemma \ref{technical lemma}}; \end{scope}
\begin{scope}[shift = {(17.3,-4)}]
\node[anchor=south] at (-3,-0.5) {$\tiny \left(\begin{array}{ccc} A_{1} & U & X\end{array}\right)\left(\begin{array}{ccc} A_{2} & \dot{O} & \dot{S}\\ 0 & Z_{2} & \dot{V}\\ 0 & 0 & A_{2} \end{array}\right)\left(\begin{array}{c} T\\ P\\ A_{3}...A_{d} \end{array}\right)$};
\node[anchor=south] at (-1.17,-2.9) {$\tiny =\left(\begin{array}{ccc} A_{1}A_{2} & \mathbf{S} & \mathbf{O}\end{array}\right)\left(\begin{array}{c} T\\ P\\ A_{3}...A_{d} \end{array}\right)$}; \end{scope}
\node[] at (0.1,-1.8) {$\downarrow{}$}; \node[anchor=west] at (0.1,-1.8) {\tiny contract and matricize};
\begin{scope}[shift = {(2,-3.2)}] \draw[black,fill=black] (0,0) circle (1ex); \node[anchor=south west] at (0,0) {$A_4...A_d$};
\draw[black,fill=black] (-2,0) circle (1ex); \node[anchor=south east] at (-2,0) {$A_1 A_2 A_3$};
\draw(-2,0)--(0,0); \draw[line width=2pt](0,0)--(0,-1); \draw[line width=2pt](-2,-1)--(-2,0); \end{scope}
\node[] at (0.1,-5) {$\downarrow{}$}; \node[anchor=west] at (0.1,-5) {\tiny Harris, Absil, Uschmajew};
\node[] at (2,-6) {$\tiny\left(\begin{array}{ccc} A_{1}A_{2}A_{3} & \mathbf{S}_2 & \mathbf{O}_2\end{array}\right)\left(\begin{array}{c} T_2\\ P_2\\ A_{4}...A_{d} \end{array}\right)$};
\draw [decorate,decoration={brace,amplitude=10pt},xshift=-4pt,yshift=0pt] (20,-7) -- (-2,-7) node [black,midway,yshift=-0.6cm]  {\footnotesize Lemma \ref{technical lemma} ... Induction};
\end{tikzpicture}
\end{figure}
Applying the matrix version of this theorem \cite[Thm 3.2]{absil_graph_similarity,uschmajew_schneider_matrix_case}
to the matricizations from $\mathcal{M}_{\leq(k_{1}+s_{1})}^{n_{1}\times(n_{2}...n_{d})}$
and to $\mathcal{M}_{\leq(k_{2}+s_{2})}^{(n_{1}n_{2})\times(n_{3}...n_{d-2})}$
we arrive at the assumptions of Lemma \ref{technical lemma} and can
decompose the tangent vector in the form
\[
\mathcal{X}=\left(\begin{array}{ccc}
A_{1} & U_{1} & X_{1}\end{array}\right)\left(\begin{array}{ccc}
A_{2} & U_{2} & X_{2}\\
0 & Z_{2} & V_{2}\\
0 & 0 & A_{2}
\end{array}\right)\left(\begin{array}{c}
T_{3}\\
P_{3}\\
A_{3}...A_{d}
\end{array}\right)
\]
with $U_{1}$ and $X_{1}$ orthogonal to $A_{1}$, the two matrices
$U_{2}$ and $X_{2}$ orthogonal to $A_{1}$ and $(V_{2}A_{3})^{L}$
orthogonal to $(A_{2}A_{3})^{L}$ from the left and right respectively
and $A_{1}U_{2}+U_{1}Z_{2}$ having full rank. Using this as inductive
basis we continue by proving the inductive step: Assume that $\mathcal{X}$
has the decomposition
\[
\mathcal{X}=\left(\begin{array}{ccc}
A_{1} & U_{1} & X_{1}\end{array}\right)\left(\begin{array}{ccc}
A_{2} & U_{2} & X_{2}\\
0 & Z_{2} & V_{2}\\
0 & 0 & A_{2}
\end{array}\right)...\left(\begin{array}{ccc}
A_{i} & U_{i} & X_{i}\\
0 & Z_{i} & V_{i}\\
0 & 0 & A_{i}
\end{array}\right)\left(\begin{array}{c}
T_{i+1}\\
P_{i+1}\\
A_{i+1}...A_{d}
\end{array}\right)
\]
with $\left(A_{i}^{R}\right)^{T}U_{i}^{R}=0\;\forall i$, $\left(A_{i}^{R}\right)^{T}X_{i}^{R}=0\;\forall i$,
$\left(V_{i}A_{i+1}...A_{d}\right)^{L}\left(\left(A_{i}...A_{d}\right)^{L}\right)^{T}=0\;\forall i$.
Then we see that in the contraction
\begin{equation}
\left(\begin{array}{ccc}
A_{1}...A_{i} & B & C\end{array}\right):=\left(\begin{array}{ccc}
A_{1} & U_{1} & X_{1}\end{array}\right)\left(\begin{array}{ccc}
A_{2} & U_{2} & X_{2}\\
0 & Z_{2} & V_{2}\\
0 & 0 & A_{2}
\end{array}\right)...\left(\begin{array}{ccc}
A_{i} & U_{i} & X_{i}\\
0 & Z_{i} & V_{i}\\
0 & 0 & A_{i}
\end{array}\right)\label{eq:BigStar}
\end{equation}
$B^{R}$ and $C^{R}$ are both orthogonal to $\left(A_{1}...A_{i}\right)^{R}$
from the left, by which we mean $\left(\left(A_{1}...A_{i}\right)^{R}\right)^{T}B^{R}=0$
and $\left(\left(A_{1}...A_{i}\right)^{R}\right)^{T}C^{R}=0$. We
thus have the first assumption (equation \ref{eq:firstDecomposition})
of Lemma \ref{technical lemma} for the variety $\mathcal{M}_{\leq(k_{i}+s_{i},k_{i+1}+s_{i+1})}^{(n_{1}...n_{i})\times n_{i+1}\times(n_{i+2}...n_{d})}$.
The second assumption follows by the matrix version from \cite{uschmajew_schneider_matrix_case}.
Thus we can apply Lemma \ref{technical lemma} to achieve the decomposition
\[
\mathcal{X}=\left(\begin{array}{ccc}
A_{1}...A_{i} & B & C\end{array}\right)\left(\begin{array}{ccc}
A_{i+1} & U_{i+1} & X_{i+1}\\
0 & Z_{i+1} & V_{i+1}\\
0 & 0 & A_{i+1}
\end{array}\right)\left(\begin{array}{c}
T_{i+2}\\
P_{i+2}\\
A_{i+2}...A_{d}
\end{array}\right)
\]
Combining this with equation \ref{eq:BigStar} completes the inductive
step and the proof of Theorem \ref{thm:arbitrary_d}.
\end{proof}
\begin{rem}
For parametrizing the tangent cone, we use the same number of parameters
as in the parametrizations of the TT variety. Each block $\left(\begin{array}{cc}
U_{i} & X_{i}\\
Z_{i} & V_{i}
\end{array}\right)$ is of size $(k_{i-1}+s_{i-1})\times(k_{i}+s_{i})$.
\end{rem}
~
\begin{rem}
\label{rem:orthogonal_sum_of_many_terms}Evaluating the expression
\[
\left(\begin{array}{ccc}
A_{1} & U_{1} & X_{1}\end{array}\right)\left(\begin{array}{ccc}
A_{2} & U_{2} & X_{2}\\
0 & Z_{2} & V_{2}\\
0 & 0 & A_{2}
\end{array}\right)...\left(\begin{array}{ccc}
A_{d-1} & U_{d-1} & X_{d-1}\\
0 & Z_{d-1} & V_{d-1}\\
0 & 0 & A_{d-1}
\end{array}\right)\left(\begin{array}{c}
X_{d}\\
V_{d}\\
A_{d}
\end{array}\right)
\]
for the tangent cone parametrization yields
\[
A_{1}...A_{d-1}X_{d}+A_{1}...A_{d-2}X_{d-1}A_{d}+...+X_{1}A_{2}...A_{d}
\]
\[
+A_{1}...A_{d-2}U_{d-1}V_{d}+A_{1}...A_{d-3}U_{d-2}V_{d-1}A_{d}+...+U_{1}V_{2}A_{3}...A_{d}
\]
\[
+A_{1}...A_{d-3}U_{d-2}Z_{d-1}V_{d}+...+U_{1}Z_{2}V_{3}A_{4}...A_{d}
\]
\[
\vdots
\]
\[
+U_{1}Z_{2}...Z_{d-1}V_{d}
\]
where all summands are pairwise orthogonal in the standard scalar
product on $\mathbb{R}^{n_{1}...n_{d}}$. Note that an ALS algorithm
only uses directions from the first line of this decomposition. The
DMRG algorithm additionally uses directions from the second line.
See \cite{ALS_DMRG_Schneider} for a study of both, ALS and DMRG.
\end{rem}
We can deduce, that in the case of TT varieties the intersection of
the tangent cones is the tangent cone of the intersection.
\begin{cor}
\label{cor:IntersectionOfTangentCones}
\[
\bigcap_{i=1,...,d-1}T_{A}\mathcal{M}_{\leq k_{i}}^{(n_{1}...n_{i})\times(n_{i+1}...n_{d})}\subset T_{A}\mathcal{M}_{\leq(k_{1},...,k_{d-1})}^{n_{1}\times...\times n_{d}}
\]
 and thus
\[
T_{A}\mathcal{M}_{\leq(k_{1},...,k_{d-1})}^{n_{1}\times...\times n_{d}}=\bigcap_{i=1,...,d-1}T_{A}\mathcal{M}_{\leq k_{i}}^{(n_{1}...n_{i})\times(n_{i+1}...n_{d})}
\]
\end{cor}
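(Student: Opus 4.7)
The plan is to exploit the observation that the proof of Theorem~\ref{thm:arbitrary_d} only ever uses per-matricization tangent-cone information, not the stronger assumption that $\xi$ lies in the TT tangent cone; so its construction delivers exactly the reverse inclusion needed here.

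First I would dispose of the easy direction $T_A\mathcal{M}_{\leq(k_{1},\ldots,k_{d-1})}^{n_{1}\times\cdots\times n_{d}}\subset\bigcap_{i}T_{A}\mathcal{M}_{\leq k_{i}}^{(n_{1}\ldots n_{i})\times(n_{i+1}\ldots n_{d})}$, which follows by the same ``by definition'' argument as Lemma~\ref{lem:T(cup)=00003DCup(T)}: since $\mathcal{M}_{\leq(k_{1},\ldots,k_{d-1})}^{n_{1}\times\cdots\times n_{d}}\subset\mathcal{M}_{\leq k_{i}}^{(n_{1}\ldots n_{i})\times(n_{i+1}\ldots n_{d})}$ for each $i$, every converging secant on the left is simultaneously a converging secant in each matricization variety on the right.

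For the substantive inclusion I would pick $\xi\in\bigcap_{i}T_{A}\mathcal{M}_{\leq k_{i}}^{(n_{1}\ldots n_{i})\times(n_{i+1}\ldots n_{d})}$, let $(r_{1},\ldots,r_{d-1})$ be the TT rank of $A$, set $s_{i}:=k_{i}-r_{i}\geq 0$, and fix an orthogonal TT factorization $A=A_{1}\cdots A_{d}$ with $A_{1}^{T}A_{1}=I$ and $(A_{j}^{R})^{T}A_{j}^{R}=I$ for $j=2,\ldots,d-1$, as in Theorem~\ref{thm:arbitrary_d}. For each split $i$, applying the matrix parametrization of \cite{uschmajew_schneider_matrix_case} to the $i$-th matricization produces an $s_{i}$-decomposition of $\xi$. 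These are precisely the hypotheses (\ref{eq:firstDecomposition}) and (\ref{eq:secondDecomposition}) of Lemma~\ref{technical lemma} at every split, so the inductive construction carried out in the proof of Theorem~\ref{thm:arbitrary_d} goes through verbatim and yields the block-triangular parametrization of $\xi$ with all the stated orthogonality constraints, never using anything about $\xi$ beyond the $s_{i}$-decompositions themselves.

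Finally I would invoke Lemma~\ref{lem:subset_of_tangent_cone} on the resulting parametrization, which places $\xi$ inside $T_{A}\mathcal{M}_{\leq(k_{1},\ldots,k_{d-1})}^{n_{1}\times\cdots\times n_{d}}$ and closes the circle. The main (and really only) obstacle is a bookkeeping one: verifying that the proof of Theorem~\ref{thm:arbitrary_d} truly does consume only the per-matricization $s_{i}$-decompositions and not some implicit use of the stronger hypothesis $\xi\in T_{A}\mathcal{M}_{\leq(k_{1}+s_{1},\ldots,k_{d-1}+s_{d-1})}^{n_{1}\times\cdots\times n_{d}}$. A careful re-reading of Lemma~\ref{technical lemma} and the inductive step shows that this is indeed the case, so the corollary amounts to noting that the theorem has already been proved in this slightly stronger form.
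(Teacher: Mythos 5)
Your proposal is correct and takes essentially the same route as the paper: both observe that the construction in Theorem~\ref{thm:arbitrary_d} (via the matrix result of \cite{uschmajew_schneider_matrix_case} and Lemma~\ref{technical lemma}) consumes only the per-matricization tangent-cone memberships, so it applies verbatim to any $\mathcal{X}$ in the intersection, after which Lemma~\ref{lem:subset_of_tangent_cone} yields the substantive inclusion and Lemma~\ref{lem:T(cup)=00003DCup(T)} supplies the easy reverse one. You are somewhat more explicit than the paper about the reindexing $s_i=k_i-r_i$ and about the inductive step being needed beyond a single application of Lemma~\ref{technical lemma}, but the argument is the same.
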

\begin{proof}
If 
\[
\mathcal{X}\in\bigcap_{i=1,...,d-1}T_{A}\mathcal{M}_{\leq k_{i}}^{(n_{1}...n_{i})\times(n_{i+1}...n_{d})}
\]
 then by Lemma \ref{technical lemma} works and we can find coefficient
tensors such that we can write $\mathcal{X}$ in our parametrization.
But then by Lemma \ref{lem:subset_of_tangent_cone}
\[
\mathcal{X}\in T_{A}\mathcal{M}_{\leq(k_{1},...,k_{d-1})}^{n_{1}\times...\times n_{d}}.
\]
\end{proof}
This corollary was unexpected because of the following example.
\begin{example}
The tangent cone of the intersection is not always equal to the intersection
of the tangent cones. Consider the plane $\mathcal{M}:=\{(x,y,z)\in\mathbb{R}^{3}:\,x=0\}$
and the cylinder $\mathcal{N}:=\{(x,y,z)\in\mathbb{R}^{3}:(x-1)^{2}+y^{2}=1\}$
and the point $(0,0,0)\in\mathcal{N}\cap\mathcal{M}$. Being the line
where both varieties touch, the tangent cone $T_{A}\mathcal{M}$ of
$\mathcal{M}$ at $A$ is the same as the tangent cone of $\mathcal{N}$
at $A$, namely the $y$-$z$-plane. However the tangent cone of $\mathcal{M}\cap\mathcal{N}=\{(x,y,z)\in\mathbb{R}^{3}:x=y=0\}$
at $A$ is only the $z$-axis.
\end{example}
We can show that the issue raised in example \ref{exa:analytic curves}
is unimportant for TT varieties. Namely:
\begin{cor}
\label{cor:first_der_suffice}The tangent cone to a TT variety is
equivalent to the set of all first derivatives to analytic arcs.
\end{cor}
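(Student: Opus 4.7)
My plan is to prove the two inclusions separately, with the nontrivial direction following almost immediately from what has already been established. First, the easy direction: if $v = \dot{\gamma}(0)$ for some analytic arc $\gamma:[0,\varepsilon]\to \mathcal{M}$ with $\gamma(0)=A$, then taking $x_n := \gamma(1/n)$ and $a_n := n$ gives a sequence in $\mathcal{M}$ with $x_n \to A$ and $a_n(x_n - A) \to \dot{\gamma}(0) = v$, so $v$ lies in the Bouligand tangent cone. This requires nothing beyond the definition.

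For the nontrivial direction, the plan is to exploit Theorem \ref{thm:arbitrary_d} together with the explicit arc constructed in Lemma \ref{lem:subset_of_tangent_cone}. Given $\mathcal{X} \in T_A \mathcal{M}_{\leq(k_1+s_1,\dots,k_{d-1}+s_{d-1})}^{n_1\times\cdots\times n_d}$, Theorem \ref{thm:arbitrary_d} produces coefficient tensors $U_i, X_i, V_i, Z_i$ realizing $\mathcal{X}$ in the block parametrized form. But then the analytic curve
\[
\gamma(t) = \bigl(A_1+tX_1\;\; U_1\bigr)\bigl(\begin{smallmatrix} A_2+tX_2 & U_2 \\ tV_2 & Z_2\end{smallmatrix}\bigr)\cdots\bigl(\begin{smallmatrix} A_{d-1}+tX_{d-1} & U_{d-1} \\ tV_{d-1} & Z_{d-1}\end{smallmatrix}\bigr)\bigl(\begin{smallmatrix} A_d+tX_d \\ tV_d\end{smallmatrix}\bigr)
\]
constructed inside the proof of Lemma \ref{lem:subset_of_tangent_cone} satisfies $\gamma(0)=A$, takes values in $\mathcal{M}_{\leq(k_1+s_1,\dots,k_{d-1}+s_{d-1})}^{n_1\times\cdots\times n_d}$, is analytic (it is polynomial in $t$), and has $\dot{\gamma}(0) = \mathcal{X}$ by the product rule. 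Hence $\mathcal{X}$ is a first derivative of an analytic arc, as required.

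I should also briefly note the case where $A$ is a smooth point of the variety: then Theorem \ref{thm:arbitrary_d} still applies with the relevant $s_i$ equal to zero in the nontrivial blocks, and the construction above degenerates but still yields an analytic (in fact polynomial) curve realizing the given tangent vector as its first derivative; alternatively one can observe that at a smooth point the Bouligand cone equals the usual tangent space and every element of the tangent space is obviously the velocity of an analytic curve. Either way, both inclusions are established and the two descriptions of the tangent cone coincide.

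The main potential obstacle is purely bookkeeping: verifying that the curve constructed in Lemma \ref{lem:subset_of_tangent_cone} genuinely lies in the bounded-rank variety for all sufficiently small $t$, which is immediate from the block structure since each matricization of $\gamma(t)$ is visibly a product of matrices with the correct number of columns. No higher-order-derivative phenomena arise, in stark contrast to Example \ref{exa:analytic curves}, precisely because the parametrization in Theorem \ref{thm:arbitrary_d} already packages every tangent direction as a first-order perturbation of a TT decomposition.
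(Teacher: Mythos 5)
Your argument is essentially identical to the paper's: both directions are handled the same way, invoking Theorem \ref{thm:arbitrary_d} to obtain the block parametrization and then Lemma \ref{lem:subset_of_tangent_cone}'s explicit polynomial curve for the nontrivial inclusion, and the sequence $\gamma(1/n)$ for the trivial one. The extra remarks about the smooth-point case and why the curve stays in the variety are harmless but not needed, since Lemma \ref{lem:subset_of_tangent_cone} already covers this.
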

\begin{proof}
By theorem \ref{thm:arbitrary_d} every tangent vector can be written
in the form
\[
\left(\begin{array}{ccc}
A_{1} & U_{1} & X_{1}\end{array}\right)\left(\begin{array}{ccc}
A_{2} & U_{2} & X_{2}\\
0 & Z_{2} & V_{2}\\
0 & 0 & A_{2}
\end{array}\right)...\left(\begin{array}{ccc}
A_{d-1} & U_{d-1} & X_{d-1}\\
0 & Z_{d-1} & V_{d-1}\\
0 & 0 & A_{d-1}
\end{array}\right)\left(\begin{array}{c}
X_{d}\\
V_{d}\\
A_{d}
\end{array}\right)
\]
 and by Lemma \ref{lem:subset_of_tangent_cone} this is the first
derivative of the analytic curve
\[
\gamma:\,t\mapsto
\]
\[
\left(\begin{array}{cc}
A_{1}+tX_{1} & U_{1}\end{array}\right)\left(\begin{array}{cc}
A_{2}+tX_{2} & U_{2}\\
tV_{2} & Z_{2}
\end{array}\right)...\left(\begin{array}{cc}
A_{d-1}+tX_{d-1} & U_{d-1}\\
tV_{d-1} & Z_{d-1}
\end{array}\right)\left(\begin{array}{c}
A_{d}+tX_{d}\\
tV_{d}
\end{array}\right).
\]
The converse is trivial by using the sequence $\left(\eta\left(\frac{1}{m}\right)\right)_{\mathbb{N}\ni m\geq N}$
for an analytic curve $\eta$.
\end{proof}

\section{Retraction onto the variety}

As a retraction one can use the curve from Lemma \ref{lem:subset_of_tangent_cone}.
For a retraction we adopt the definition from \cite{uschmajew_schneider_matrix_case}:
\begin{defn}
Let $\mathcal{M}$ be an algebraic variety. The tangent bundle of
the variety $\mathcal{M}$ is the set $\bigcup_{x\in\mathcal{M}}\left(\{x\}\times T_{x}\mathcal{M}\right)$.
A retraction is a function $R$ from the tangent bundle to the variety
such that for any fixed $x\in\mathcal{M}$ and $v\in T_{x}\mathcal{M}$
the function $t\mapsto R(x,tv)$ is continuous on $[0,\infty)$ and
\[
\lim_{t\searrow0}\frac{R(x,tv)-x-tv}{t}=0.
\]
\end{defn}
\begin{lem}
The function
\[
R:\mathcal{X}=\left(\begin{array}{ccc}
A_{1} & U_{1} & X_{1}\end{array}\right)\left(\begin{array}{ccc}
A_{2} & U_{2} & X_{2}\\
0 & Z_{2} & V_{2}\\
0 & 0 & A_{2}
\end{array}\right)...\left(\begin{array}{ccc}
A_{d-1} & U_{d-1} & X_{d-1}\\
0 & Z_{d-1} & V_{d-1}\\
0 & 0 & A_{d-1}
\end{array}\right)\left(\begin{array}{c}
X_{d}\\
V_{d}\\
A_{d}
\end{array}\right)
\]
\[
\mapsto\left(\begin{array}{cc}
A_{1}+X_{1} & U_{1}\end{array}\right)\left(\begin{array}{cc}
A_{2}+X_{2} & U_{2}\\
V_{2} & Z_{2}
\end{array}\right)...\left(\begin{array}{cc}
A_{d-1}+X_{d-1} & U_{d-1}\\
V_{d-1} & Z_{d-1}
\end{array}\right)\left(\begin{array}{c}
A_{d}+X_{d}\\
V_{d}
\end{array}\right)
\]
defines a retraction in the sense of the definition above.
\end{lem}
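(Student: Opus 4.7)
The plan is to identify $R(x,tv)$ with the polynomial curve $\gamma$ from Lemma~\ref{lem:subset_of_tangent_cone} and then finish via a standard Taylor estimate.

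First I would observe that if $v=\mathcal{X}$ is parametrized by $(X_i,U_i,V_i,Z_i)$ as in Theorem~\ref{thm:arbitrary_d}, then the scaled vector $tv=t\mathcal{X}$ admits the parametrization $(tX_i,U_i,tV_i,Z_i)$. To see this I would inspect the orthogonal expansion in Remark~\ref{rem:orthogonal_sum_of_many_terms}: each summand is either of the form $A_1\cdots X_j\cdots A_d$ (linear in exactly one $X_j$) or of the form $A_1\cdots U_j Z_{j+1}\cdots Z_{k-1}V_k\cdots A_d$ (containing exactly one $V_k$ and one $U_j$). Replacing every $X_j$ by $tX_j$ and every $V_k$ by $tV_k$ therefore multiplies each summand by exactly one factor of $t$, yielding $t\mathcal{X}$.

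With this scaling convention, plugging the parametrization $(tX_i,U_i,tV_i,Z_i)$ into the defining formula for $R$ produces exactly the curve $\gamma(t)$ from Lemma~\ref{lem:subset_of_tangent_cone}, i.e.\ $R(x,tv)=\gamma(t)$. From that Lemma three facts are already available: $\gamma$ is polynomial in $t$ (hence analytic); $\gamma(t)$ lies in the variety $\mathcal{M}^{n_1\times\cdots\times n_d}_{\leq(k_1+s_1,\ldots,k_{d-1}+s_{d-1})}$ for every real $t$, since the block factors retain the prescribed dimensions; and $\gamma(0)=A_1\cdots A_d=x$ with $\dot\gamma(0)=\mathcal{X}=v$. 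Polynomial dependence on $t$ immediately gives continuity of $t\mapsto R(x,tv)$ on $[0,\infty)$.

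Finally, Taylor expanding the polynomial curve at the origin gives
\[
R(x,tv)-x-tv=\gamma(t)-\gamma(0)-t\dot\gamma(0)=O(t^{2}),
\]
so dividing by $t$ and letting $t\searrow 0$ yields the limit $0$ required by the definition of a retraction. The only genuinely new bookkeeping beyond what is already proved is the identification of $(tX_i,U_i,tV_i,Z_i)$ as a parametrization of $t\mathcal{X}$; I expect this to be the main (minor) obstacle, while everything else is a direct reuse of Lemma~\ref{lem:subset_of_tangent_cone}. Because $\gamma$ is actually polynomial rather than merely analytic, no delicate convergence arguments are needed.
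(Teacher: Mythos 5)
Your proposal is correct and follows essentially the same route as the paper: identify $R(x,tv)$ with the polynomial curve $\gamma(t)$ from Lemma~\ref{lem:subset_of_tangent_cone}, then observe that $\gamma(t)-\gamma(0)-t\dot\gamma(0)$ is $O(t^2)$. You are somewhat more explicit than the paper in justifying that $t\mathcal{X}$ is parametrized by $(tX_i,U_i,tV_i,Z_i)$ via the orthogonal expansion of Remark~\ref{rem:orthogonal_sum_of_many_terms}, and in noting that $\gamma(t)$ stays inside the variety; the paper treats both points as immediate.
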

\begin{proof}
The image under $R$ of the tangent vector multiplied by $t$, $R(t\mathcal{X})$
is

\[
\left(\begin{array}{cc}
A_{1}+tX_{1} & U_{1}\end{array}\right)\left(\begin{array}{cc}
A_{2}+tX_{2} & U_{2}\\
tV_{2} & Z_{2}
\end{array}\right)...\left(\begin{array}{cc}
A_{d-1}+tX_{d-1} & U_{d-1}\\
tV_{d-1} & Z_{d-1}
\end{array}\right)\left(\begin{array}{c}
A_{d}+tX_{d}\\
tV_{d}
\end{array}\right).
\]
We calculate
\[
\lim_{t\searrow0}\frac{R(x,tv)-x-tv}{t}=\lim_{t\searrow0}\frac{t^{2}(\mbox{polynomial in \ensuremath{t}})}{t}=\lim_{t\searrow0}t(\mbox{polynomial in \ensuremath{t}})=0
\]
\end{proof}
Note that this retraction is particularly easy to calculate if the
tangent vectors are given in the described format.

\section{The hierarchical format}

All of the above generalises in a straight-forward way to the hierarchical
and Tucker format. However the notation is difficult. Therefore we
will omit some details. See \cite{hackbusch} or \cite{kuehn} for
the definition and a detailed study of the hierarchical tensor format.
We will only give the equivalent of the technical Lemma \ref{technical lemma}
for the Tucker format with order $3$. This will allow us to use the
same inductive step as in theorem \ref{thm:arbitrary_d} to prove
the parametrization for any binary tree. In further generalizing the
technical lemma to arbitrary Tucker formats, one could prove the theorem
for arbitrary tree formats.

Let $A_{1}\in\mathbb{R}^{n_{1}\times k_{1}}$, $A_{2}\in\mathbb{R}^{k_{2}\times n_{2}}$
, $A_{3}\in\mathbb{R}^{n_{3}\times k_{3}}$ and $A_{4}\in\mathbb{R}^{k_{1}\times k_{2}\times k_{3}}$
with $A_{1}$, $A_{2}$ and $A_{3}$ having full rank. For writing
simple tensor tree diagrams, we can use the Kronecker product. Sorting
the indices $k_{1}$ and $k_{3}$ lexicographically, we can identify
the tree diagram and the term depicted in figure \ref{fig:Kronecker-product-notation}.

\begin{figure}[h]
\caption{\label{fig:Kronecker-product-notation}Kronecker product notation
for tensor trees}
\hspace{1.5cm}\begin{tikzpicture}[scale=0.5]
\draw[black,fill=black] (0,0) circle (1ex); \node[anchor=south] at (0,0) {$A_4$};
\draw[black,fill=black] (2,0) circle (1ex); \node[anchor=south] at (2,0) {$A_3$};
\draw[black,fill=black] (-2,0) circle (1ex); \node[anchor=south] at (-2,0) {$A_1$};
\draw[black,fill=black] (0,-2) circle (1ex); \node[anchor=east] at (0,-2) {$A_2$};
\node[anchor=south] at (-1,0) {\tiny $k_1$}; \node[anchor=south] at (1,0) {\tiny $k_3$}; \node[anchor=east] at (0,-1) {\tiny $k_2$};
\draw(-3,0)--(-2,0)--(0,0)--(2,0)--(3,0);
\draw(0,0)--(0,-3);
\node[anchor = west] at (4,-1) {$=\left((A_1\otimes A_3)A_4^{(k_1k_3)\times k_2}A_2\right)^{n_1\times n_2 \times n_3}$};
\end{tikzpicture}

\end{figure}

We can write this in the following three ways:

\[
\left((A_{1}\otimes A_{3})A_{4}^{(k_{1}k_{3})\times k_{2}}\right)\cdot A_{2}
\]
\[
=A_{1}\cdot\left(A_{4}^{k_{1}\times(k_{3}k_{2})}(A_{3}\otimes A_{2})\right)
\]
\[
=A_{3}\cdot\left(A_{4}^{k_{3}\times(k_{1}k_{2})}(A_{1}\otimes A_{2})\right)
\]
Now any tangent vector from a tucker variety $\mathcal{M}_{\leq(k_{1}+s_{1},k_{2}+s_{2},k_{3}+s_{3})}^{n_{1}\times n_{2}\times n_{3}}$
(we use the obvious generalization of the symbols defined for the
TT varieties) parametrized by $A_{1}$, $A_{2}$, $A_{3}$ and $A_{4}$
can be decomposed in the $\tilde{s}_{2}$-decomposition
\[
\left((A_{1}\otimes A_{3})A_{4}^{(k_{1}k_{3})\times k_{2}}\right)Y_{2}+\mathbf{X}_{2}A_{2}+\mathbf{U}_{2}V_{2},
\]
in the $\tilde{s}_{1}$-decomposition
\[
A_{1}\mathbf{Y}_{1}+X_{1}\left(A_{4}^{k_{1}\times(k_{2}k_{3})}(A_{2}\otimes A_{3})\right)+U_{1}\mathbf{V_{1}}
\]
and the $\tilde{s}_{3}$-decomposition
\[
A_{3}\mathbf{Y}_{3}+X_{3}\left(A_{4}^{k_{3}\times(k_{1}k_{2})}(A_{1}\otimes A_{2})\right)+U_{3}\mathbf{V}_{3}
\]
with $\tilde{s}_{1}\leq s_{1}$, $\tilde{s}_{2}\leq s_{2}$ and $\tilde{s}_{3}\leq s_{3}$.
We can further decompose each of the three into the $8$ orthogonal
subspaces
\[
\operatorname{range}(A_{1})\otimes\operatorname{range}(A_{2}^{T})\otimes\operatorname{range}(A_{3}),\quad\operatorname{range}(A_{1})\otimes\operatorname{range}(A_{2}^{T})\otimes\operatorname{range}(A_{3})^{\perp},
\]
\[
\operatorname{range}(A_{1})^{\perp}\otimes\operatorname{range}(A_{2}^{T})\otimes\operatorname{range}(A_{3}),\quad\operatorname{range}(A_{1})^{\perp}\otimes\operatorname{range}(A_{2}^{T})\otimes\operatorname{range}(A_{3})^{\perp},
\]
\[
\operatorname{range}(A_{1})\otimes\operatorname{range}(A_{2}^{T})^{\perp}\otimes\operatorname{range}(A_{3}),\quad\operatorname{range}(A_{1})\otimes\operatorname{range}(A_{2}^{T})^{\perp}\otimes\operatorname{range}(A_{3})^{\perp},
\]
\[
\operatorname{range}(A_{1})^{\perp}\otimes\operatorname{range}(A_{2}^{T})^{\perp}\otimes\operatorname{range}(A_{3}),\quad\operatorname{range}(A_{1})^{\perp}\otimes\operatorname{range}(A_{2}^{T})^{\perp}\otimes\operatorname{range}(A_{3})^{\perp},
\]
Exemplarily we further decompose the $\tilde{s}_{1}$-decomposition.
For this purpose we need to write $\mathbf{Y}_{1}$ as the orthogonal
sum
\[
\mathbf{Y}_{1}^{k_{1}\times(n_{2}n_{3})}=(I\otimes A_{3})\dot{Y}_{1}A_{2}+Y_{1}^{3}A_{2}+(I\otimes A_{3})Y_{1}^{2}+Y_{1}^{2,3}
\]
such that $(I\otimes A_{3})^{T}Y_{1}^{3}=0$, $Y_{1}^{2}A_{2}^{T}=0$,
$(I\otimes A_{3})^{T}Y_{1}^{2,3}=0$ and $Y_{1}^{2,3}A_{2}^{T}=0$
(use pseudo inverses for this purpose as in equation \ref{eq:pseudoinverses}).
Analogously we rewrite $\mathbf{V}_{1}$ as
\[
\mathbf{V}_{1}^{k_{1}\times(n_{2}n_{3})}=(I\otimes A_{3})\dot{V}_{1}A_{2}+V_{1}^{3}A_{2}+(I\otimes A_{3})V_{1}^{2}+V_{1}^{2,3}
\]
 such that the $\tilde{s}_{1}$-decomposition can be rewritten as
the orthogonal sum
\begin{equation}
(A_{1}\otimes A_{3})\dot{Y}_{1}A_{2}\label{eq:dec1}
\end{equation}
\begin{equation}
+(A_{1}\otimes I)Y_{1}^{3}A_{2}\label{eq:dec1-1}
\end{equation}
\begin{equation}
+(A_{1}\otimes A_{3})Y_{1}^{2}\label{eq:dec1-2}
\end{equation}
\begin{equation}
+(A_{1}\otimes I)Y_{1}^{2,3}\label{eq:dec1-3}
\end{equation}
\begin{equation}
+((U_{1}\otimes A_{3})\dot{V}_{1}+(X_{1}\otimes A_{3})A_{4})A_{2}\label{eq:dec1-4}
\end{equation}
\begin{equation}
+U_{1}V_{1}^{3}A_{2}\label{eq:dec1-5}
\end{equation}
\begin{equation}
+(U_{1}\otimes A_{3})V_{1}^{2}\label{eq:dec1-6}
\end{equation}
\begin{equation}
+U_{1}V_{1}^{2,3}\label{eq:dec1-7}
\end{equation}
Comparing coefficients with the orthogonal decompositions of the $\tilde{s}_{2}$-
and $\tilde{s}_{3}$-decompositions, we arrive at the representation
\[
\mathcal{X}=\left((\begin{array}{ccc}
A_{1} & U_{1} & X_{1}\end{array})\otimes(\begin{array}{ccc}
A_{3} & U_{3} & X_{3}\end{array})\right)\mathbf{C}\left(\begin{array}{c}
Y_{2}\\
V_{2}\\
A_{2}
\end{array}\right)
\]
with $\mathbf{C}\in\mathbb{R}^{(k_{1}+\tilde{s}_{1}+k_{1})(k_{3}+\tilde{s}_{3}+k_{3})\times(k_{2}+\tilde{s}_{2}+k_{2})}$
having the form depicted in figure \ref{fig:3d-diagram}.

\begin{figure}[H]
\caption{\label{fig:3d-diagram}Coefficient tensor of tangent cone parametrization
for order 3 Tucker}
\hspace{1.5cm}\begin{tikzpicture}[scale=0.5]

\node[] at (-5,5){$\mathbf{C}=$};

\newcommand{\cuboid}[6]{
\begin{scope}[shift={(#4,#5)}]
\draw[fill=white](#1,#2)--(#1,0)--(0,0)--(0,#2)--cycle;
\draw[fill=white](#1,0)--(#1+#3/2,#3/2)--(#1+#3/2,#2+#3/2)--(#1,#2)--cycle;
\draw[fill=white](#1+#3/2,#2+#3/2)--(#3/2,#2+#3/2)--(0,#2)--(#1,#2)--cycle;
\draw[draw=black](#1,#2)--(#1,0)--(0,0)--(0,#2)--(#1,#2)--(#1+#3/2,#2+#3/2);
\draw[draw=black](#1,0)--(#1+#3/2,#3/2)--(#1+#3/2,#2+#3/2)--(#3/2,#2+#3/2)--(0,#2);
\end{scope}
}
\cuboid{3}{3}{3}{0}{0};
\node[anchor=north east] at (3,3) {$X_4$};
\cuboid{3}{1}{3}{0}{5};
\node[anchor=south west] at (1.6,6.2) {$U_4$};
\cuboid{3}{3}{3}{0}{8};
\node[anchor=south west] at (1,9) {$A_4$};
\cuboid{3}{3}{1}{-1.5}{-1.5};
\node[anchor=north east] at (1.5,1.5) {$V_4$};
\cuboid{3}{1}{1}{-1.5}{3.5};
\node[] at (-2,4) {$\tilde{s}_1$};
\node[anchor=south west] at (-0.6,3.4) {$\bar{W}_4$};
\cuboid{3}{3}{3}{-4}{-4};
\node[anchor=south west] at (-3,-3) {$A_4$};
\cuboid{1}{3}{3}{5}{0};
\node[] at (7,8) {$\tilde{s}_2$};
\node[anchor=south west] at (6,1.5) {$W_4$};
\cuboid{3}{3}{3}{8}{0};
\node[anchor=south west] at (9,1) {$A_4$};
\node[anchor=south] at (11,4.5) {$k_2$};
\node[anchor=west] at (12.5,3) {$k_1$};
\node[anchor=north west] at (11.7,1) {$k_3$};
\cuboid{1}{1}{3}{5}{5};
\node[anchor=south west] at (5,4.9) {$\bar{V}_4$};
\cuboid{1}{3}{1}{3.5}{-1.5};
\node[] at (5.3,-1.5) {$\tilde{s}_3$};
\node[anchor=south west] at (3.4,-0.5) {$\bar{U}_4$};
\cuboid{1}{1}{1}{3.5}{3.5};
\node[anchor=south west] at (3.4,3.4) {$Z_4$};
\end{tikzpicture} 

\end{figure}
The coefficients of the block tensor $\mathbf{C}$ are
\[
X_{4}=(A_{1}^{\dagger}\otimes A_{3}^{\dagger})\mathbf{X}_{2},\quad Z_{2}=(U_{1}^{\dagger}\otimes U_{3}^{\dagger})U_{2}^{1,3},\quad U_{4}=\dot{V}_{1},
\]
\[
W_{4}=\dot{U}_{2},\quad V_{4}=\dot{V}_{3},\quad\bar{V}_{4}=V_{1}^{2}V_{2}^{\dagger}=(U_{1}^{T}\otimes I)U_{2}^{1},
\]
\[
\bar{W}_{2}=(U_{1}^{\dagger}\otimes U_{3}^{\dagger})X_{2}^{1,3},\quad\bar{U}_{4}=(I\otimes U_{3}^{\dagger})U_{2}^{3}.
\]
The inductive step works because by
\[
\left((A_{1}\otimes A_{3})A_{4}^{(k_{1}k_{3})\times k_{2}},\,\mathbf{U}_{4},\,\mathbf{X}_{4}\right)=\left((\begin{array}{ccc}
A_{1} & U_{1} & X_{1}\end{array})\otimes(\begin{array}{ccc}
A_{3} & U_{3} & X_{3}\end{array})\right)\mathbf{C}
\]
we can reduce the parametrization to the matrix case and reproduce
$\mathbf{U}_{4}$ and $\mathbf{X}_{4}$.

\section{Implicit description of the tangent cone}

The tangent cone for the matrix case can be implicitely defined as
the variety
\[
\left\{ \mathcal{X}\in\mathbb{R}^{n\times m}:\operatorname{rank}\left((I-A_{1}A_{1}^{\dagger})\mathcal{X}(I-A_{2}^{\dagger}A_{2})\right)\leq s_{1}\right\} 
\]
where the rank can be bounded by a set of determinants of minors.
Since we have shown in Corollary \ref{cor:IntersectionOfTangentCones}
that the tangent cone of a tensor variety is the intersection of tangent
cones of matrix varieties, the set of defining equations of the tensor
variety is the union of defining equations of matrix varieties of
the appropriate matricizations.

\section{Acknowledgements}

I thank in particular Reinhold Schneider, Max Pfeffer, André Uschmajew, \newline Sebastian Wolf, Benjamin Huber, Jesko Hüttenhain, Paul Breiding, Kathlén
Kohn and Bernd Sturmfels for productive discussions and useful hints.

\newpage{}

\bibliographystyle{plain}

\end{document}